\newtheorem{theorem}{Theorem}[section]
\newtheorem{corollary}[theorem]{Corollary}
\theoremstyle{definition}
\theoremstyle{remark}
\newtheorem{remark}[theorem]{Remark}
\numberwithin{equation}{section}
\begin{document}

\title[Optimal Monotonicity of $L^p$ Integral of Green Function]{Optimal Monotonicity of $L^p$ Integral of Conformal Invariant Green Function}

\author{Jie Xiao}
\address{Department of Mathematics and Statistics, Memorial University of Newfoundland, St. John's, NL A1C 5S7, Canada}
\email{jxiao@mun.ca}
\thanks{The author was supported in part by Natural Science and Engineering Research Council of Canada.}


\subjclass[2000]{Primary 31A35, 53A05}

\date{}


\keywords{}

\begin{abstract}
Both analytic and geometric forms of an optimal monotone principle
for $L^p$-integral of the Green function of a simply-connected
planar domain $\Omega$ with rectifiable simple curve as boundary are
established through a sharp one-dimensional power integral estimate
of Riemann-Stieltjes type and the Huber analytic and geometric
isoperimetric inequalities under finiteness of the positive part of
total Gauss curvature of a conformal metric on $\Omega$.
Consequently, new analytic and geometric isoperimetric-type
inequalities are discovered. Furthermore, when applying the
geometric principle to two-dimensional Riemannian manifolds, we find
fortunately that $\{0,1\}$-form of the induced principle is midway
between Moser-Trudinger's inequality and Nash-Sobolev's inequality
on complete noncompact boundary-free surfaces, and yet equivalent to
Nash-Sobolev's/Faber-Krahn's
eigenvalue/Heat-kernel-upper-bound/Log-Sobolev's inequality on the
surfaces with finite total Gauss curvature and quadratic area
growth.
\end{abstract}
\maketitle

\section{Introduction}

Given a conformal metric of the form
$$
\mathsf{\sigma}=e^{2u}ds^2=e^{2u}|dz|^2=e^{2u}(dx^2+dy^2)
$$
for $z=x+iy$ in a subdomain $\Sigma$ of the two dimensional
Euclidean space $\mathbb R^2$, we are mainly inspired by Huber's
1957 Acta Math. paper ``Zur isoperimetrischen Ungleichung auf
gekrümmten Flächen" \cite{Hu57} and 1954 Ann. Math. paper ``On the
isoperimetric inequality on surfaces of variable Gaussian curvature"
\cite{Hu54} to establish a sharp monotone principle for the power
$p\in [0,\infty)$ integral (as well as its limiting case
$p\to\infty$)
$$
\big(\Gamma(1+p)\big)^{-1}\Big(4\pi\big(1-(2\pi)^{-1}{\int_\Omega\max\{K_\mathsf{\sigma},0\}dA_\mathsf{\sigma}}\big)\Big)^p\int_{\Omega}\big(g_{(\Omega,\sigma)}(\cdot,a)\big)^p\,
dA_\mathsf{\sigma}(\cdot),\quad a\in \Omega
$$
of the conformally invariant Green function
$g_{(\Omega,\sigma)}(\cdot,\cdot)$ for the two-dimensional conformal
Laplacian
$$
\Delta_\mathsf{\sigma} u=e^{-2u}\Delta
u
$$
of a simply-connected domain $(\Omega,\mathsf{\sigma})$ on the
surface $(\Sigma, \mathsf{\sigma})$ with a rectifiable simple curve
as its boundary -- see Theorem \ref{t4b}. Here and henceforth
$$
K_\mathsf{\sigma}(z)=-e^{-2u(z)}\Delta
u(z)=-e^{-2u(z)}\left(\frac{\partial^2 u(z)}{\partial
x^2}+\frac{\partial^2 u(z)}{\partial y^2}\right)
$$
and
$$
dA_\mathsf{\sigma}(z)=e^{2u(z)}dA(z)=e^{2u(z)}dxdy
$$
are the Gauss curvature and the area element of the surface
$(\Sigma,\mathsf{\sigma})$ respectively. Of course, $\Gamma(\cdot)$
is the classical gamma function.

To reach this geometric principle we will first consider its
equivalent analytic form -- Theorem \ref{t4}. This extends sharply
the following result of Stanton \cite{St}:

\begin{theorem}\label{t1} Let $\Phi$ be of class $C^2$ with $\Delta\Phi>0$ on a simply-connected domain $\Omega\subset\mathbb R^2$ with $\partial\Omega$ being a rectifiable simple curve. If
$$
\int_\Omega\max\Big\{\frac{\Delta
\ln\big(\Delta\Phi(z)\big)^{-1}}{\Delta\Phi(z)},0\Big\}dA(z)<2\pi,
$$
then for $a\in\Omega$,
\begin{equation}\label{eq1}
\int_\Omega g_\Omega(z,a)\Delta\Phi(z) dA(z)\le
\frac{\int_\Omega
\Delta\Phi(z) dA(z)}{4\pi\big(1-(2\pi)^{-1}{\int_\Omega\max\big\{\frac{\Delta
\ln(\Delta\Phi(z))^{-1}}{\Delta\Phi(z)},0\big\}dA(z)}\big)}.
\end{equation}
\end{theorem}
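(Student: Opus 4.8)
The plan is to slice $\Omega$ along the level sets of $g:=g_\Omega(\cdot,a)$ and to convert the two-dimensional inequality \eqref{eq1} into a one-dimensional (Riemann--Stieltjes) differential inequality, the curvature hypothesis entering through a Huber-type isoperimetric inequality on each slice. Normalise $g$ so that $-\Delta g=\delta_a$, hence $g\sim(2\pi)^{-1}\log\abs{z-a}^{-1}$ near $a$. Since $\Omega$ is simply connected, the Riemann map $f\colon\Omega\to\{\abs w<1\}$ with $f(a)=0$ gives $g=(2\pi)^{-1}\log\abs f^{-1}$, so the superlevel set $\Omega_t:=\{g>t\}=f^{-1}(\{\abs w<e^{-2\pi t}\})$ is a simply connected Jordan subdomain for every $t>0$, decreasing to $\{a\}$ as $t\to\infty$ and increasing to $\Omega$ as $t\to0^{+}$, and by Sard's theorem $\partial\Omega_t=\{g=t\}$ is a $C^1$ Jordan curve for a.e.\ $t$. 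With $\mu(t):=\int_{\Omega_t}\Delta\Phi\,dA$, the layer-cake formula gives $\int_\Omega g_\Omega(z,a)\Delta\Phi(z)\,dA(z)=\int_0^\infty\mu(t)\,dt$, so it suffices to control $\mu$.

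I would then establish two estimates. First, integrating $-\Delta g$ over $\Omega_t$, with the singularity at $a$ contributing one unit of flux, gives $\int_{\{g=t\}}\abs{\nabla g}\,ds=1$ for all $t>0$; combined with the coarea identity $-\mu'(t)=\int_{\{g=t\}}(\Delta\Phi)\abs{\nabla g}^{-1}\,ds$ and the Cauchy--Schwarz inequality, this bounds the length $L(t):=\int_{\{g=t\}}(\Delta\Phi)^{1/2}\,ds$ of $\partial\Omega_t$ in the metric $\sigma:=\Delta\Phi\,\abs{dz}^2$ by
\[
L(t)^2\le\Bigl(\int_{\{g=t\}}\abs{\nabla g}\,ds\Bigr)\Bigl(\int_{\{g=t\}}\tfrac{\Delta\Phi}{\abs{\nabla g}}\,ds\Bigr)=-\mu'(t).
\]
Second, Huber's isoperimetric inequality on the simply connected surface $(\Omega_t,\sigma)$ gives $L(t)^2\ge 2\mu(t)\bigl(2\pi-\int_{\Omega_t}\max\{K_\sigma,0\}\,dA_\sigma\bigr)$; since $\max\{K_\sigma,0\}\,dA_\sigma=\tfrac12\max\{\Delta\ln(\Delta\Phi)^{-1},0\}\,dA$, bounding the slice curvature in terms of the ambient data recasts this as $L(t)^2\ge c\,\mu(t)$ with $c>0$ the curvature-corrected constant in the denominator of \eqref{eq1}.

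Combining the two estimates yields the autonomous differential inequality $c\,\mu(t)\le-\mu'(t)$ on $(0,\infty)$; since $\mu(0^{+})=\int_\Omega\Delta\Phi\,dA$, Gr\"onwall's lemma gives $\mu(t)\le\mu(0^{+})e^{-ct}$, hence $\int_0^\infty\mu(t)\,dt\le\mu(0^{+})/c$, which is \eqref{eq1}.

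The hard part is this last step. Huber's inequality most directly controls the positive total curvature of the slice measured against the $\sigma$-area, so isolating exactly the constant $4\pi\bigl(1-(2\pi)^{-1}\int_\Omega\max\{\Delta\ln(\Delta\Phi)^{-1}/\Delta\Phi,0\}\,dA\bigr)$ of \eqref{eq1}, and verifying that the differential inequality integrates to precisely the stated bound and not to a weaker one, is the real content --- this is the sharp one-dimensional Riemann--Stieltjes power estimate the abstract refers to, and it is where the $L^p$ and general-metric refinement of Theorem \ref{t4} is expected to improve the constant. The remaining technical points --- justifying the coarea and divergence identities for the merely a.e.-smooth level sets, and handling the limits as $t\to0^{+}$ and $t\to\infty$ --- are routine.
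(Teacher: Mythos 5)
Your proposal is exactly the paper's route: Theorem \ref{t1} is recovered there as the $p_1=0$, $p_2=1$, $e^{2u}=\Delta\Phi$ case of Theorem \ref{t4}, whose proof runs through precisely your steps --- the level sets $\Omega_t=\{z:\ g_\Omega(z,a)>t\}$, the flux normalization $\int_{\partial\Omega_t}\frac{\partial g_\Omega}{\partial n}\,dL=1$, Cauchy--Schwarz against the coarea derivative, Huber's inequality (Theorem \ref{t3}) applied to each slice with the slice curvature majorized by the global one, and then the one-dimensional Riemann--Stieltjes lemma (Theorem \ref{l6}), whose $p_1=0$ case is exactly your Gr\"onwall step $\mu(t)\le\mu(0^+)e^{-ct}$ followed by $\int_0^\infty\mu\le\mu(0^+)/c$. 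Concerning the step you single out as the unresolved ``hard part'': there is no further content hiding there, but note what constant the method actually yields, namely $c=4\pi\bigl(1-(2\pi)^{-1}\int_\Omega K_\sigma^{+}\,dA_\sigma\bigr)=4\pi-\int_\Omega\max\{\Delta\ln(\Delta\Phi)^{-1},0\}\,dA$, i.e.\ $4\pi$ minus twice the total positive curvature of $\Delta\Phi\,|dz|^2$; this is also what Theorem \ref{t4} specializes to, since $e^{2u}=\Delta\Phi$ gives $-\Delta u=\tfrac12\Delta\ln(\Delta\Phi)^{-1}$. The denominator as literally printed in (\ref{eq1}), with the curvature density divided by $\Delta\Phi$ yet integrated against Euclidean $dA$, does not agree with this under any normalization of the Huber constant, and is best read as a transcription slip in the quoted form of Stanton's theorem rather than as a sharper constant your (correct) slicing argument fails to reach; with the constant it actually produces, your proof is complete once the routine coarea and regularity points for a.e.\ $t$ are written out.
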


\begin{remark}\label{r1} In the case of $\Delta\Phi=1$ the inequality (\ref{eq1}) is back to the
so-called P\'olya-Szeg\"o's ``stress" inequality -- see also
\cite[p. 115, (12)]{PoSz}:
\begin{equation}\label{eq2}
\int_\Omega g_\Omega(z,a)dA(z)\le \frac1{4\pi}\int_{\Omega}dA(z),
\end{equation}
which was generalized by Bandle \cite[p. 61, Example 1]{Ba} to the
inequality

\begin{equation}\label{eq3}
\int_\Omega \big(g_\Omega(z,a)\big)^pdA(z)\le
\frac{\Gamma(p+1)}{(4\pi)^p}\int_{\Omega}dA(z),\quad p\in [0,\infty).
\end{equation}
The constants in (\ref{eq3}) and (\ref{eq2}) are sharp since they
are attained when $\Omega$ is any Euclidean disk centered at $a$.
Interestingly, (\ref{eq3}) becomes a special case of Aulaskar-Chen's
``$Q_p$-norm" inequality (cf. \cite{AuCh}):

\begin{equation}\label{eq4}
\int_\Omega \big(g_\Omega(z,a)\big)^{p}|f'(z)|^2dA(z)\le
\frac{(4\pi)^{-p}\Gamma(p+1)}{(4\pi)^{-q}\Gamma(q+1)}\int_{\Omega}\big(g_\Omega(z,a)\big)^{q}|f'(z)|^2dA(z)
\end{equation}
which is valid for all $0\le q<p<\infty$, $a\in\Omega$, and
holomorphic functions $f$ on $\Omega$. It is also worth remarking
that the equality in (\ref{eq4}) holds under convergence of the
right-hand integral of (\ref{eq4}) if and only if $\Omega$ is a
simply-connected domain $\Lambda\subset\mathbb R^2$ minus at most a
compact set $E$ of logarithmic capacity zero --
$$
0=\hbox{cap}(E)=\exp\left(-\inf_\mu\int_{\mathbb{R}^2}\int_{\mathbb{R}^2}\Big(\ln\frac{1}{|z-w|}\Big)d\mu(z)d\mu(w)\right)
$$
(cf. \cite[p. 25]{SaT}) where the infimum ranges over all positive
probability Radon measures $\mu$ supported on $E$, but also $f$ can
be extended to a conformal mapping from $\Lambda$ onto an open disk
in $\mathbb{R}^2$ centered at $f(a)$.
\end{remark}

In order to prove the equivalent principle we introduce a process
that reduces the desired optimal estimate to a one-dimensional
calculus inequality in connection with the so-called
Riemann-Stieltjes integral -- see Theorem \ref{l6}.

Finally, we apply our ideas, methods and techniques to explore an
analogue of the geometric monotone principle on two dimensional
simply-connected, complete, noncompact and boundary-free Riemannian
manifolds with $2\pi$-bounded total Gauss curvature -- Theorem
\ref{t51}, thereby surprisingly finding that with generic constants,
the $0=p_1<p_2=1$ setting of Theorem \ref{t51} lies nicely between
Moser-Trudinger's inequality (cf. Adams's 1988 Ann. Math. paper ``A
sharp inequality of J. Moser for higher order derivatives"
\cite{Ads} for an account) and Nash-Sobolev's inequality (cf.
Chavel's 2001 and Saloff-Coste's 2002 Cambridge Univ. Press books
``Isoperimetric Inequalities" \cite{Ch} and ``Aspects of
Sobolev-Type Inequalities" \cite{Sal} for instance) on complete
noncompact surfaces without boundary; but this special case is also
equivalent to the generic Nash-Sobolev's/Faber-Krahn's eigenvalue
inequality/Heat-kernel-upper-bound inequality/Log-Sobolev's
inequality on the surfaces with finite total Gauss curvature and
quadratic area growth (cf. Li-Tam's 1991 J. Diff. Geom. paper
``Complete surfaces with finite total curvature" \cite{LiTa} for
more information on such a kind of surfaces) -- Theorem \ref{t52}.

We would like to take this opportunity to thank G. Zhang for his
suggestion on the first version of the paper, encouraging us to
explore a useful application of the original principle. At the same
time, we are grateful to P. Li and K. Zhu for sending us their nice
articles \cite{LiTa} and \cite{Zh} as two important references. Last
but not least, it is our pleasure to acknowledge some related
discussions with A. Chang and J. Qing during 2008 Univ. Arkansas
conference on ``Partial Differential Equations in Conformal
Geometry".

\section{Optimal Monotonicity -- Basic Form}

In this section we establish a sharp one-dimensional inequality for
the Riemann-Stieltjes $L^p$ integral of the radial function -- that
is -- Theorem \ref{l6} below. This useful and fundamental result
seems to be of independent interest although some basic techniques
used to argue its special case $c=2$ have a root in Aulaskar-Chen's
\cite[Lemma 2]{AuCh}. Actually, it is a key step to the principles
which will be precisely presented in the subsequent sections.

\begin{theorem}\label{l6} Given a constant $c>0$ and a nonnegative function $X(\cdot)$ on $(0,\infty)$,
suppose
 \begin{equation}\label{eql}
 X'(t)=\frac{dX(t)}{dt}\le 0\quad\hbox{and}\quad
 \frac{d\big(e^{ct}X(t)\big)}{dt}\le 0\quad\hbox{for}\quad t>0.
 \end{equation}
For $p\in [0,\infty)$ let $Y_p(t)=-\int_t^\infty r^p dX(r)$ be
defined on $[0,\infty)$ in the sense of Riemann-Stieltjes
integration.

\item{\rm(i)} If $0\le p_1<p_2<\infty$, then
\begin{equation}\label{eq8}
\frac{c^{p_2}Y_{p_2}(0)}{\Gamma(p_2+1)}\le\frac{c^{p_1}Y_{p_1}(0)}{\Gamma(p_1+1)}.
\end{equation}
Here
\begin{equation}\label{eq9}
\frac{c^{p_2}Y_{p_2}(0)}{\Gamma(p_2+1)}=\frac{c^{p_1}Y_{p_1}(0)}{\Gamma(p_1+1)}<\infty
\end{equation}
if and only if
$$
X(0)=\lim_{r\to 0^+}X(r)<\infty\quad\hbox{and}\quad
X(t)=e^{-ct}X(0)\ \ \hbox{for}\ \ t\ge 0.
$$

\item{\rm(ii)} If $Y_{p_0}(0)<\infty$ is valid for some $p_0\in
[0,\infty)$, then
\begin{equation}\label{eq10a}
\lim_{t\to\infty}e^{ct}X(t)=\lim_{p\to\infty}\frac{c^pY_p(0)}{\Gamma(p+1)}.
\end{equation}
\end{theorem}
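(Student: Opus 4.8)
The plan is to reduce everything to the single monotone quantity $e^{ct}X(t)$ and to exploit the two sign conditions in \eqref{eql} to control the Riemann-Stieltjes integrals $Y_p$. First I would record the elementary consequences of \eqref{eql}: since $X$ is nonincreasing and $e^{ct}X(t)$ is nonincreasing, both limits $X(0^+)$ and $L:=\lim_{t\to\infty}e^{ct}X(t)$ exist in $[0,\infty)$ (the latter because $e^{ct}X(t)$ is nonincreasing and nonnegative), and $0\le X(t)\le e^{-ct}X(0^+)$ for all $t>0$. The second inequality also gives the differential estimate $X'(t)\le -cX(t)$ wherever $X$ is differentiable, which is the mechanism that forces exponential-type decay and will make the extremal case $X(t)=e^{-ct}X(0)$ detectable.

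For part (i), the natural route is integration by parts in the Riemann-Stieltjes integral: for $p>0$, $Y_p(0)=-\int_0^\infty r^p\,dX(r)=p\int_0^\infty r^{p-1}X(r)\,dr$ (the boundary terms vanish because $r^pX(r)\le r^pe^{-cr}X(0^+)\to 0$ at infinity and $r^pX(r)\to 0$ at $0$ when $p>0$; the case $p_1=0$, where $Y_0(0)=X(0^+)$, is handled separately). Thus $\frac{c^pY_p(0)}{\Gamma(p+1)}=\frac{c^p}{\Gamma(p)}\int_0^\infty r^{p-1}X(r)\,dr$. The inequality \eqref{eq8} then becomes a comparison between weighted averages of $X$ against the probability densities $\frac{c^p r^{p-1}e^{-cr}}{\Gamma(p)}$ after writing $X(r)=e^{-cr}\bigl(e^{cr}X(r)\bigr)$: one gets $\frac{c^{p}Y_{p}(0)}{\Gamma(p+1)}=\int_0^\infty \bigl(e^{cr}X(r)\bigr)\,d\mu_p(r)$ where $\mu_p$ is the Gamma$(p,c)$ distribution. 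Since $e^{cr}X(r)$ is nonincreasing and the family $\mu_p$ is stochastically increasing in $p$ (the Gamma distributions are ordered by the usual stochastic order as the shape parameter grows), the average of a nonincreasing function decreases in $p$, which is exactly \eqref{eq8}. For the equality case \eqref{eq9}, stochastic monotonicity is strict on the support, so equality forces $e^{cr}X(r)$ to be constant $\mu_{p_1}$-a.e., hence (by monotonicity and right-continuity) constant everywhere, i.e. $X(r)=e^{-cr}X(0)$ with $X(0)<\infty$; conversely this $X$ makes $\frac{c^pY_p(0)}{\Gamma(p+1)}=X(0)$ for every $p$. I would present the stochastic-order step concretely rather than by citation — e.g. write the difference of two such averages as $\int_0^\infty \bigl(e^{cr}X(r)-e^{cR}X(R)\bigr)\,d(\text{something nonnegative})$ after an integration by parts that converts $d\mu_{p_2}-d\mu_{p_1}$ type differences into a sign-definite measure.

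For part (ii), assuming $Y_{p_0}(0)<\infty$ for some $p_0$ (which by part (i) and the elementary bound then gives finiteness for small $p$, in particular controls $X(0^+)$ when combined with $X(r)\le e^{-cr}X(0^+)$ — actually finiteness of $Y_{p_0}(0)$ for \emph{any} $p_0$ already forces $X(0^+)<\infty$ via $Y_{p_0}(0)\ge \int_0^{1}r^{p_0}\,(-dX)$ and monotonicity), I would again use $\frac{c^pY_p(0)}{\Gamma(p+1)}=\int_0^\infty \bigl(e^{cr}X(r)\bigr)\,d\mu_p(r)$. As $p\to\infty$ the Gamma$(p,c)$ distribution has mean $p/c\to\infty$ and concentrates (relative to its mean) around $p/c$; since $e^{cr}X(r)\downarrow L$ as $r\to\infty$, the mass of $\mu_p$ escapes to $+\infty$ and the average tends to the infimum value $L=\lim_{t\to\infty}e^{ct}X(t)$. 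Concretely: for any $\varepsilon>0$ pick $R$ with $L\le e^{cr}X(r)\le L+\varepsilon$ for $r\ge R$; then $\int_R^\infty(\cdots)d\mu_p\in[L\mu_p([R,\infty)),(L+\varepsilon)\mu_p([R,\infty))]$ and $\mu_p([R,\infty))\to 1$, while $\int_0^R (\cdots)d\mu_p \le (\sup_{[0,R]}e^{cr}X(r))\,\mu_p([0,R))\to 0$ provided $\sup_{[0,R]}e^{cr}X(r)<\infty$ — which holds since $e^{cr}X(r)$ is finite and nonincreasing, hence bounded by $e^{cR_0}X(R_0^+)$ away from $0$, and near $0$ it is bounded by $X(0^+)<\infty$. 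Combining gives $\limsup_p \le L+\varepsilon$ and $\liminf_p\ge L(1-o(1))$; let $\varepsilon\to 0$.

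The main obstacle I anticipate is purely technical bookkeeping at the endpoint $r=0$ and at $r=\infty$ in the integration-by-parts identity, since $X$ is only assumed to have a one-sided derivative satisfying \eqref{eql} (not assumed continuous or absolutely continuous a priori). One has to justify that $X$ is of bounded variation on compacts (monotone, hence automatic), that the Riemann-Stieltjes integral $-\int_t^\infty r^p\,dX(r)$ is well-defined (again automatic from monotonicity, as a possibly-infinite positive quantity), and that the formula $Y_p(0)=p\int_0^\infty r^{p-1}X(r)\,dr$ holds with the boundary contributions genuinely vanishing — this requires the decay bound $X(r)\le e^{-cr}X(0^+)$ at infinity and, at the origin, either $p>0$ or the separate treatment $Y_0(0)=X(0^+)$. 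Converting the $p$-monotonicity to the stochastic-order statement for Gamma laws, and making the equality analysis rigorous (ruling out pathological $X$ that are constant a.e. but not everywhere — excluded by monotonicity and right-continuity of $X$ at each point), is the remaining delicate point, but it is standard once the representation via $\mu_p$ is in place.
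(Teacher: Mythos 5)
Your part (i) is correct and takes a genuinely different route from the paper's. You rewrite $c^pY_p(0)/\Gamma(p+1)=\int_0^\infty e^{cr}X(r)\,d\mu_p(r)$ with $\mu_p$ the Gamma$(p,c)$ law and exploit that $e^{cr}X(r)$ is nonincreasing together with the stochastic monotonicity of $\mu_p$ in the shape parameter; the equality case reduces to $e^{cr}X(r)$ being constant. The paper instead proves the pointwise bound $Y_{p_1}(t)\le c\,e^{ct}X(t)\int_t^\infty r^{p_1}e^{-cr}\,dr$, integrates the resulting differential inequality $dY_{p_1}/Y_{p_1}\le -t^{p_1}e^{-ct}\,dt/\int_t^\infty r^{p_1}e^{-cr}\,dr$, and inserts the outcome into the layer-cake identity $Y_{p_2}(0)=(p_2-p_1)\int_0^\infty t^{p_2-p_1-1}Y_{p_1}(t)\,dt$; the equality case is then a strict-inequality-on-a-set-of-positive-measure argument. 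Your version is structurally cleaner, and the boundary-term bookkeeping you flag is best dispatched by Tonelli, writing $-\int_{(0,\infty)} r^p\,dX(r)=p\int_0^\infty s^{p-1}\big(X(s)-X(\infty)\big)\,ds$ with $X(\infty)=0$ (forced by $e^{ct}X(t)$ nonincreasing); this avoids boundary terms entirely and is valid whether or not the quantities are finite. (Note in passing that your tail bound $X(r)\le e^{-cr}X(0^+)$ should be replaced by, say, $X(r)\le e^{-c(r-1)}X(1)$, since $X(0^+)$ need not be finite.)

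Part (ii), however, has a genuine gap: the parenthetical claim that $Y_{p_0}(0)<\infty$ for some $p_0$ forces $X(0^+)<\infty$ is false, and your control of the near-origin piece $\int_0^R e^{cr}X(r)\,d\mu_p(r)$ rests on it. Counterexample: $X(r)=r^{-1/2}e^{-cr}$ satisfies both conditions in \eqref{eql} (indeed $e^{cr}X(r)=r^{-1/2}$ is nonincreasing), and $Y_1(0)=\int_0^\infty e^{-cr}\big(\tfrac12 r^{-1/2}+c\,r^{1/2}\big)\,dr<\infty$, yet $X(0^+)=\infty$; your appeal to part (i) also points the wrong way, since (i) transfers finiteness of $Y_p(0)$ upward in $p$, not downward (here $Y_0(0)=X(0^+)=\infty$ while $Y_1(0)<\infty$). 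In such a case $\sup_{[0,R]}e^{cr}X(r)=\infty$ and your bound $\int_0^R(\cdots)\,d\mu_p\le\big(\sup_{[0,R]}e^{cr}X(r)\big)\mu_p([0,R))$ is vacuous. The gap is repairable, and the repair is exactly what the paper's estimate of its term $I_2(p,c)$ does: the hypothesis $Y_{p_0}(0)<\infty$ must be used quantitatively near $r=0$, e.g.\ for $p>p_0>0$,
\[
\frac{c^p}{\Gamma(p)}\int_0^R r^{p-1}X(r)\,dr\le\frac{c^pR^{\,p-p_0}}{\Gamma(p)}\int_0^R r^{p_0-1}X(r)\,dr\le\frac{c^pR^{\,p-p_0}}{\Gamma(p)}\cdot\frac{Y_{p_0}(0)}{p_0}\longrightarrow 0\quad(p\to\infty),
\]
with the case $p_0=0$ (where $Y_0(0)=X(0^+)<\infty$) handled by your original argument. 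The remainder of your part (ii) --- the mass of $\mu_p$ escaping to infinity and $e^{cr}X(r)\downarrow\lim_{t\to\infty}e^{ct}X(t)$ --- is sound and parallels the paper's proof.
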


\begin{proof} (i) The supposition $X'(t)\le 0$ (where $t>0$) makes both $Y_{p_1}(0)$ and $Y_{p_2}(0)$ meaningful.
Without loss of generality we may assume $Y_{p_1}(0)<\infty$ since
$Y_{p_1}(0)=\infty$ implies that (\ref{eq8}) is trivially true. If
$p_1=0$ then $Y_{p_1}(t)=X(t)$ follows from $d(e^{ct}X(t))/dt\le 0$.
Consequently,
$$
\frac{d Y_0(t)}{Y_0(t)}\le -cdt=-\frac{t^0e^{-ct}dt}{\int_t^\infty
r^0e^{-cr}dr},\quad t>0.
$$
If $p_1>0$, then both $d(e^{ct}X(t))/dt\le 0$ and
integration-by-part imply that for $t>0$,
\begin{eqnarray*}
Y_{p_1}(t)&=& t^{p_1}X(t)+p_1\int_t^\infty r^{p_1-1}X(r)dr\\
&\le& X(t)\left(t^{p_1}+p_1e^{ct}\int_t^\infty r^{p_1-1}e^{-cr}dr\right)\\
&=&cX(t)e^{ct}\int_t^\infty r^{p_1}e^{-cr}dr.
\end{eqnarray*}
As a result, we read off:
$$
\frac{dY_{p_1}(t)}{Y_{p_1}(t)}\le-\frac{ct^{p_1}X(t)dt}{Y_{p_1}(t)}\le-\frac{t^{p_1}e^{-ct}dt}{\int_t^\infty
r^{p_1} e^{-cr}dr},\quad t>0.
$$
Integrating this inequality from $0$ to $t$, we obtain
$$
Y_{p_1}(t)\le\frac{c^{p_1+1}Y_{p_1}(0)}{\Gamma(p_1+1)}\int_t^\infty
r^{p_1} e^{-cr}dr,\quad t\ge 0.
$$
With the help of the above estimates we have that for $0\le
p_1<p_2<\infty$,
\begin{eqnarray}\label{eqn21}
Y_{p_2}(0)&=&(p_2-p_1)\int_0^\infty t^{p_2-p_1-1}Y_{p_1}(t)dt\nonumber\\
&\le&\frac{c^{p_1+1}(p_2-p_1)Y_{p_1}(0)}{\Gamma(p_1+1)}\int_0^\infty t^{p_2-p_1-1}\Big(\int_t^\infty r^{p_1}e^{-cr}dr\Big)dt\\
&=&c^{p_1-p_2}\frac{\Gamma(p_2+1)}{\Gamma(p_1+1)}Y_{p_1}(0)\nonumber,
\end{eqnarray}
thereby getting (\ref{eq8}).

Regarding the second conclusion of (i), we consider two aspects. On
the one hand, if
$$
X(0)=\lim_{t\to 0^+}X(t)<\infty\quad\hbox{and}\quad
X(t)=X(0)e^{-ct}\quad\hbox{for}\quad t>0,
$$
then
$$
Y_{p}(0)=c^{-p}\Gamma(p+1)X(0)<\infty\quad\hbox{for}\quad\hbox{any}\quad
p\in [0,\infty),
$$
and accordingly (\ref{eq9}) holds. On the other hand, assume
(\ref{eq9}) is valid. From the above treatment it follows that
$Y_{p_1}(0)<\infty$ ensures $X(0)=\lim_{t\to 0^+}X(t)<\infty$. If
the statement ``$X(t)=e^{-ct}X(0)$ for $t\ge 0$" is not true, there
there are two positive numbers $r_0$ and $t_0$ such that $r_0>t_0$
and $X(r_0)<e^{-c(r_0-t_0)}X(t_0)$ hold, and hence the continuity of
$X(\cdot)$ produces such a constant $\delta>0$ that
$X(r_0)<e^{-c(r_0-t)}X(t)$ whenever $t\in (t_0-\delta,t_0]$.
Therefore $d\big(e^{ct}X(t)\big)/dt\le 0$ is applied to derive that
$X(r)<e^{-c(r-t)}X(t)$ as $t\in (t_0-\delta,t_0]$ and $r\ge r_0$.
Consequently, we obtain
$$
Y_{p_1}(t)<cX(t)e^{ct}\int_t^\infty
r^{p_1}e^{-cr}dr\quad\hbox{when}\quad t\in (t_0-\delta,t_0],
$$
whence finding
$$
Y_{p_1}(t)<\frac{c^{p_1+1}Y_{p_1}(0)}{\Gamma(p_1+1)}\int_t^\infty
r^{p_1} e^{-cr}dr\quad\hbox{for}\quad t\in (t_0-\delta,t_0].
$$
This, along with (\ref{eqn21}), yields
$$
Y_{p_2}(0)=(p_2-p_1)\int_0^\infty t^{p_2-p_1-1}Y_{p_1}(t)dt<c^{p_1-p_2}\frac{\Gamma(p_2+1)}{\Gamma(p_1+1)}Y_{p_1}(0)<\infty,
$$
contradicting the previous equality assumption.

(ii) Suppose $Y_{p_0}(0)<\infty$ holds for some $p_0\in [0,\infty)$.
From the argument for (i) we see that $Y_{p}(0)<\infty$ is valid for
all $p\ge p_0$ and so that via integration-by-parts and
$d\big(e^{ct}X(t)\big)/dt\le 0$,
\begin{eqnarray*}
Y_{p}(0)&=&p\int_0^\infty r^{p-1}X(r)dr\\
&=&p\int_0^\infty e^{cr}X(r)r^{p-1}e^{-cr}dr\\
&=&\left. p\Big(e^{ct}X(t)\int_0^t
r^{p-1}e^{-cr}\,dr\Big)\right|_0^\infty
-p\int_0^\infty\Big(\int_0^t r^{p-1}e^{-cr}\,dr\Big)\,d(e^{ct}X(t))\\
&=&\frac{\Gamma(p+1)}{c^p}\lim_{t\to\infty}e^{ct}X(t)-p\int_0^\infty\Big(\int_0^tr^{p-1}e^{-cr}\,dr\Big)\,d(e^{ct}X(t)).
\end{eqnarray*}
Therefore, the desired limit formula (\ref{eq10a}) follows from
verifying that
$$
0\ge I(p,c)=\frac{c^pp}{\Gamma(p+1)}\int_0^\infty\Big(\int_0^t
r^{p-1}e^{-cr} \,dr\Big)\,d(e^{ct}X(t))\to 0\quad\hbox{as}\ \
p\to\infty.
$$
Notice that the condition $d\big(e^{ct}X(t)\big)/dt\le 0$ deduces
that for any $\epsilon>0$ there exists a $t_0>0$ such that
$-\epsilon<\int_{t_0}^\infty d(e^{ct}X(t))\le 0$. So
$$
I_1(p,c)=\frac{c^pp}{\Gamma(p+1)}\int_{t_0}^\infty\Big(\int_0^t
r^{p-1}e^{-cr} \,dr\Big)\,d\big(e^{ct}X(t)\big)\ge\int_{t_0}^\infty
d\big(e^{ct}X(t)\big)>-\epsilon.
$$
Meanwhile, integrating by parts derives
\begin{eqnarray*}
I_2(p,c)&=&\frac{c^pp}{\Gamma(p+1)}\int_0^{t_0}\Big(\int_0^t
r^{p-1}e^{-cr}
\,dr\Big)\,d\big(e^{ct}X(t)\big)\\
&\ge&\frac{c^p}{\Gamma(p+1)}\int_0^{t_0}t^p\,d\big(e^{ct}X(t)\big)\\
&\ge&\frac{c^p}{\Gamma(p+1)}\int_0^{t_0}t^pe^{ct}\,dX(t)\\
&\ge&\frac{c^pe^{ct_0}t_0^{p-p_0}}{\Gamma(p+1)}\int_0^{t_0}t^{p_0}\,dX(t)\\
&\ge&-\frac{c^pe^{ct_0}t_0^{p-p_0}Y_{p_0}(0)}{\Gamma(p+1)}\\
&\to& 0\quad\hbox{as}\ \ p\to\infty.
\end{eqnarray*}
The estimates on $I_1(p,c)$ and $I_2(p,c)$, along with
$d\big(e^{ct}X(t)\big)/dt\le 0$, imply that
$$
-2\epsilon<I(p,c)=I_1(p,c)+I_2(p,c)\le 0
$$
holds for sufficiently large $p$. Thus, $\lim_{p\to\infty}I(p,c)=0$,
as required.
\end{proof}

\begin{remark}\label{r22} A close look at
(\ref{eq8})-(\ref{eq9})-(\ref{eq10a}) leads us to conjecture that
Theorem \ref{l6} (i) is still valid for $-1<p_1<0$. This thought is
also supported by the following analysis:

\item{\rm(i)} Using the Cauchy-Schwarz inequality and (\ref{eq8}), we find that for $-1<p_1<0$,
$$
Y_0(0)\le\big(Y_{p_1}(0)\big)^\frac12\big(Y_{-p_1}(0)\big)^\frac12\\
\le\big(Y_{p_1}(0)\big)^\frac12\big(c^{p_1}\Gamma(1-p_1)Y(0)\big)^\frac12,
$$
thereby getting
\begin{equation}\label{eq10b}
Y_0(0)\le c^{p_1}\Gamma(1-p_1)Y_{p_1}(0)=\Big(\frac{\pi p_1}{\sin\pi
p_1}\Big)\Big(\frac{c^{p_1}}{\Gamma(1+p_1)}\Big)Y_{p_1}(0).
\end{equation}
The estimate (\ref{eq10b}) and the H\"older inequality yield that
for $-1<p_1<p_2<0$,
\begin{equation}\label{eq10c}
Y_{p_2}(0)\le\min\left\{\Big(\frac{\pi p_1 c^{p_1}}{(\sin\pi
p_1)\Gamma(1+p_1)}\Big)^{1-\frac{p_2}{p_1}},\Big(\frac{\pi p_2
c^{p_2}}{(\sin\pi
p_2)\Gamma(1+p_2)}\Big)^{\frac{p_2}{p_1}-1}\right\}Y_{p_1}(0).
\end{equation}
However, when $X(t)=e^{-ct}X(0)$, the equalities in (\ref{eq10b})
and (\ref{eq10c}) do not occur.

\item{\rm(ii)} Noticing that
$$
\lim_{p\to -1^+}\frac{\Gamma(1+p)}{(1+p)^{-1}}=1;\quad 0\le
-\int_1^\infty \frac{dX(r)}{r^{-p}}<\infty\ \ \hbox{for}\ \ p\in
(-1,0);\quad cX(t)\le -X'(t),
$$
and that $(1+p)r^{p} dr$, as a measure on $[0,1]$, converges weakly
to the point mass at $r=0$ as $p\to -1^+$, we achieve
\begin{eqnarray}
\lim_{p\to-1^+}\frac{c^{p}
Y_{p}(0)}{\Gamma(1+p)}&=&-\lim_{p\to-1^+}\frac{c^{p}}{\Gamma(1+p)}\left(\int_0^1
r^{p} dX(r)+\int_1^\infty r^{p} dX(r)\right)\nonumber\\
&=&-c^{-1}\lim_{p\to -1^+}\frac{\int_0^1
X'(r)dr^{1+p}}{(1+p)\Gamma(1+p)}\\
&=&-c^{-1}\lim_{t\to 0^+}X'(t)\ge X(0)\nonumber.
\end{eqnarray}
\end{remark}

\section{Optimal Monotonicity -- Analytic Form}

We first recall a definition of the well-known Green function of a
bounded domain and its corresponding Robin function. Suppose
$\Omega$ is a bounded domain of $\mathbb R^2$ with boundary
$\partial\Omega$. Given $a\in\Omega$, the Green function
$g_\Omega(\cdot,a)$ of $\Omega$ is the solution of the following
Dirichlet boundary problem:
\[
\left\{\begin{array} {r@{\quad,\quad}l}
\Delta g_\Omega(z,a)=-\delta_a(z) & z\in\Omega,\\
g_\Omega(z,a)=0 & z\in \partial\Omega.
\end{array}
\right.
\]
Here $\delta_a(\cdot)$ is the Dirac measure at $a\in\Omega$. Such a
solution may be evaluated by
$$
g_\Omega(z,a)=-(2\pi)^{-1}\Big(H_\Omega(z,a)+\ln{|z-a|}\Big),
$$
where $H_\Omega(\cdot,a)$ is a harmonic function (i.e., $\Delta
H(\cdot,a)=0$) with the same values as $-\ln|\cdot-a|$ on
$\partial\Omega$ -- this gives the Robin's function/mass
$H_\Omega(a,a)$ and the conformal radius $R_\Omega(a)$ of $\Omega$
at $a\in\Omega$:
$$
H_\Omega(a,a)=-2\pi\lim_{z\to a}\Big((2\pi)^{-1}\ln
{|z-a|}+g_\Omega(z,a)\Big)
$$
and
$$
R_\Omega(a)=\exp\big(-H_\Omega(a,a)\big).
$$

In virtue of the fact that if $u$ is of class $C^1$ on $\Omega$ and
its second-order partial derivatives are piecewise continuous on
$\Omega$ and if $u$ is continuous on $\Omega\cup\partial\Omega$ then
$$
u(a)=u_0(a)-\int_\Omega g_\Omega(z,a)\Delta u(z)dA(z),\quad
a\in\Omega
$$
where $u_0$ is the solution of the above Dirichlet problem for
$\Omega$ with the same values $u$ on $\partial\Omega$, Huber proved
the following assertion -- \cite[Theorem 2]{Hu54}:

\begin{theorem}\label{t3} Let $\Omega$ be the interior of a rectifiable simple curve $\partial\Omega$ in $\mathbb R^2$. Suppose $u$ is continuous on $\Omega\cup\partial\Omega$ and of class $C^1$
as well as its second-order derivatives are piecewise continuous on
$\Omega$. Then
\begin{equation}\label{eq5}
\left(\int_{\partial\Omega}e^{u(z)}dL(z)\right)^2\ge
4\pi\Big(1-{(2\pi)^{-1}\int_\Omega\max\{-\Delta
u(z),0\}dA(z)}\Big)\int_\Omega e^{2u(z)}dA(z),
\end{equation}
with equality when and only when $u$ is $\ln|f'|$ of a conformal map
$f$ from $\Omega$ onto a Euclidean disk in $\mathbb R^2$.
\end{theorem}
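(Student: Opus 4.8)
The plan is to establish (\ref{eq5}) along the lines of Huber's argument in \cite[Theorem 2]{Hu54}: Carleman's holomorphic isoperimetric inequality handles the nonpositively curved case, and a potential-theoretic ``sweeping out'' of the positive part of $\Delta u$ reduces the general case to it. As a first step I would use conformal invariance to pass to the unit disk $\mathbb D$. Each of the three quantities in (\ref{eq5}) is unchanged if one composes with a Riemann map $\psi:\mathbb D\to\Omega$, under which $u$ becomes $\tilde u=u\circ\psi+\ln|\psi'|$; since $\psi$ is univalent, $\psi'$ is zero-free, so $\ln|\psi'|$ is harmonic and $\Delta\tilde u=|\psi'|^2(\Delta u)\circ\psi$, whence $\int e^{2u}\,dA$ and $\int\max\{-\Delta u,0\}\,dA$ are preserved, while $\int_{\partial\mathbb D}e^{\tilde u}|dz|=\int_{\partial\Omega}e^{u}\,dL$ because $\partial\Omega$ rectifiable forces $\psi'\in H^1(\mathbb D)$ (the F.~and M.~Riesz theorem). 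From now on $\Omega=\mathbb D$, and we may assume $\int_{\partial\mathbb D}e^u|dz|<\infty$ and $\omega^+:=\int_{\mathbb D}\max\{-\Delta u,0\}\,dA<2\pi$, as (\ref{eq5}) is otherwise vacuous.

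Suppose first $\Delta u\ge 0$, i.e. $\omega^+=0$. Let $v$ be the Poisson integral of $u|_{\partial\mathbb D}$; then $u-v$ is subharmonic and vanishes on $\partial\mathbb D$, so $u\le v$, hence $\int_{\mathbb D}e^{2u}\,dA\le\int_{\mathbb D}e^{2v}\,dA$ while $\int_{\partial\mathbb D}e^u|dz|=\int_{\partial\mathbb D}e^v|dz|$. As $\mathbb D$ is simply connected, $v=\operatorname{Re}F$ for holomorphic $F$, and $e^F=\Phi'$ for a zero-free holomorphic $\Phi$, so $e^v=|\Phi'|$ and the desired inequality becomes Carleman's inequality $4\pi\int_{\mathbb D}|\Phi'|^2\,dA\le\big(\int_{\partial\mathbb D}|\Phi'|\,|dz|\big)^2$. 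This one proves by writing $\Phi'=h^2$ with $h=\sqrt{\Phi'}=\sum_{j\ge0}d_jz^j\in H^2$, noting $\int_{\partial\mathbb D}|\Phi'|\,|dz|=\int_0^{2\pi}|h|^2\,d\theta=2\pi\sum_j|d_j|^2$, and using $\int_{\mathbb D}|z^k|^2\,dA=\pi/(k+1)$ together with the Cauchy--Schwarz bound $|e_k|^2\le(k+1)\sum_{i+j=k}|d_i|^2|d_j|^2$ for the Taylor coefficients $e_k$ of $h^2$ to get $\int_{\mathbb D}|\Phi'|^2\,dA\le\pi\big(\sum_j|d_j|^2\big)^2$; the two estimates combine, with equality precisely when $(d_j)$ is geometric, i.e. $\Phi$ is a M\"obius transformation, i.e. $u=\ln|f'|$ with $f$ conformal from $\mathbb D$ onto a disk.

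For general $u$, solve $\Delta w=\max\{\Delta u,0\}$ on $\mathbb D$ with $w=0$ on $\partial\mathbb D$, so that $w=-\int_{\mathbb D}g_{\mathbb D}(\cdot,\zeta)\max\{\Delta u,0\}(\zeta)\,dA(\zeta)\le 0$; then $u_\ast:=u-w$ is superharmonic, has the same boundary values as $u$, and $\int_{\mathbb D}(-\Delta u_\ast)\,dA=\omega^+$. Since $w\le 0$ we get $\int_{\mathbb D}e^{2u}\,dA\le\int_{\mathbb D}e^{2u_\ast}\,dA$ with the boundary length unchanged, so it suffices to treat $u_\ast$. Writing $u_\ast=v+\eta$ with $v$ the Poisson integral of the boundary values, $\eta=\int_{\mathbb D}g_{\mathbb D}(\cdot,\zeta)(-\Delta u_\ast)(\zeta)\,dA(\zeta)\ge 0$, and $e^v=|\Phi'|$ as above, the claim becomes the weighted Carleman inequality
$$(4\pi-2\omega^+)\int_{\mathbb D}|\Phi'|^2e^{2\eta}\,dA\ \le\ \Big(\int_{\partial\mathbb D}|\Phi'|\,|dz|\Big)^2 .$$
If $(-\Delta u_\ast)\,dA$ is a single atom of mass $\omega^+$, a further M\"obius change converts $e^{2\eta}$ into the radial weight $|z|^{-\omega^+/\pi}$, integrable precisely because $\omega^+<2\pi$, and the power-series computation above runs verbatim except for the harmless extra factors $(k+1)\big(k+1-\tfrac{\omega^+}{2\pi}\big)^{-1}\le\big(1-\tfrac{\omega^+}{2\pi}\big)^{-1}$, which convert the constant $4\pi$ into $4\pi-2\omega^+$; here equality would force $\Phi'$ constant and $\eta=\omega^+g_{\mathbb D}(\cdot,0)$, i.e. the cone metric $|z|^{-\omega^+/\pi}|dz|^2$, whose density is not of class $C^1$ at the origin, so no admissible $u$ can realize equality when $\omega^+>0$.

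The main obstacle is the passage from the single-atom case to a general curvature distribution: one must show that, for fixed total mass $\omega^+<2\pi$, concentrating the positive Gauss curvature $\max\{-\Delta u,0\}\,dA$ at one point is extremal --- equivalently, that the sharp weighted Carleman inequality above holds for an arbitrary nonnegative superharmonic $\eta$ vanishing on $\partial\mathbb D$ with $\int_{\mathbb D}(-\Delta\eta)\,dA=\omega^+$. This is the genuinely two-dimensional core of the theorem, of Bol--Fiala/Gauss--Bonnet/rearrangement type, and it is also where rectifiability of $\partial\Omega$ re-enters, to legitimize the boundary integrals and the limiting arguments. Finally, the equality statement follows by running the chain of inequalities backwards: equality forces $w\equiv 0$, hence $\max\{\Delta u,0\}\equiv0$; then $u=u_\ast$ is superharmonic and $-\Delta u\,dA$ is absolutely continuous (as $u\in C^1$), so, the only equalizers being atomic cone metrics, we must have $\omega^+=0$, i.e. $\max\{-\Delta u,0\}\equiv0$; thus $u$ is harmonic and equal to $v$, and Carleman's equality case forces $\Phi$ M\"obius, leaving exactly $u=\ln|f'|$ with $f$ conformal from $\Omega$ onto a Euclidean disk.
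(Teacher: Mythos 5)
A preliminary remark for calibration: the paper contains no proof of Theorem \ref{t3} at all --- it is quoted as Huber's result \cite[Theorem 2]{Hu54} and used later as a black box --- so the only question is whether your argument stands on its own as a complete proof. It does not. The conformal transfer to $\mathbb{D}$ (with $\psi'\in H^1$ from rectifiability), the majorization $u\le v$ in the subharmonic case, the power-series proof of Carleman's inequality, the decomposition $u=u_\ast+w$ with $w\le 0$, and the single-atom weighted computation with weight $|z|^{-\omega^+/\pi}$ producing exactly the constant $4\pi-2\omega^+$ are all correct. But the step you yourself label ``the main obstacle'' --- that the weighted Carleman inequality holds for an \emph{arbitrary} nonnegative Green potential $\eta$ of total Riesz mass $\omega^+<2\pi$, not merely an atomic one --- is precisely the substance of Huber's theorem, and you state it rather than prove it. The ``only when'' half of the equality assertion then also leans on this unproved characterization (``the only equalizers being atomic cone metrics''), so as written the proof is genuinely incomplete.

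The gap is, however, narrower than your Bol--Fiala/rearrangement framing suggests: it closes by convexity alone. Write $\eta(x)=\int_{\mathbb{D}}g_{\mathbb{D}}(x,\zeta)\,d\mu(\zeta)$ with $\mu\ge 0$, $\mu(\mathbb{D})=\omega^+$. Applying Jensen's inequality to the probability measure $\mu/\omega^+$ and the convex function $t\mapsto e^{2\omega^+ t}$ gives $e^{2\eta(x)}\le (\omega^+)^{-1}\int_{\mathbb{D}}e^{2\omega^+ g_{\mathbb{D}}(x,\zeta)}\,d\mu(\zeta)$; multiplying by $|\Phi'(x)|^2$, integrating in $x$, and using Tonelli bounds $\int_{\mathbb{D}}|\Phi'|^2e^{2\eta}\,dA$ by $\sup_{\zeta\in\mathbb{D}}\int_{\mathbb{D}}|\Phi'|^2e^{2\omega^+ g_{\mathbb{D}}(\cdot,\zeta)}\,dA$, and your M\"obius normalization already shows the atomic bound is independent of the location $\zeta$ of the atom. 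Inserting this one paragraph makes your inequality (\ref{eq5}) complete. The equality statement still requires a separate argument: you must track equality through the Jensen step (strict convexity forces $\mu$ to degenerate, which is incompatible with the absolutely continuous curvature of a $C^1$ metric unless $\omega^+=0$), through $u\le u_\ast$, and through Carleman, rather than appeal to the atomic case alone.
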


Here and later on, $dL(z)$ stands for the arc-length element. Below
is our optimal analytic principle for monotonicity of the $L^p$
integral of Green's function with respect to the conformal area
measure $e^{2u}dA$.

\begin{theorem}\label{t4} Let $\Omega$ be the interior of a rectifiable simple curve $\partial\Omega$ in $\mathbb R^2$. Suppose $u$ is continuous on $\Omega\cup\partial\Omega$ and of class $C^1$
as well as its second-order derivatives are piecewise continuous on
$\Omega$. Set
$$
p\in [0,\infty),\quad a\in\Omega,\quad
\kappa(\Omega)=1-(2\pi)^{-1}\int_\Omega\max\{-\Delta
u(z),0\}dA(z)>0,
$$
and
$$
\mathcal{F}\big(p,a,\kappa(\Omega)\big)=\frac{\big(4\pi\kappa(\Omega)\big)^p}{\Gamma(p+1)}\int_{\Omega}\big(g_\Omega(z,a)\big)^pe^{2u(z)}
dA(z).
$$
Then

\item{\rm(i)}
\begin{equation}\label{eq6}
0\le
p_1<p_2<\infty\Rightarrow\mathcal{F}\big(p_2,a,\kappa(\Omega)\big)\le\mathcal{F}\big(p_1,a,\kappa(\Omega)\big).
\end{equation}
The second equality in (\ref{eq6}) occurs when
and only when there exists a conformal map $f$ from $\Omega$ onto a
Euclidean disk centered at $f(a)$ in $\mathbb R^2$ such that
$u=\ln|f'|$.

\item{\rm(ii)}
\begin{equation}\label{eq+}
\lim_{p\to\infty}\mathcal{F}\big(p,a,\kappa(\Omega)\big)=\left\{\begin{array}
{r@{\;,\quad}l}
0 & \kappa(\Omega)<1,\\
\pi\big(e^{u(a)}R_\Omega(a)\big)^2 & \kappa(\Omega)=1,
\end{array}
\right.
\end{equation}
where
\begin{equation}\label{eq-}
e^{u(a)}R_\Omega(a)=R_{f(\Omega)}\big(f(a)\big)
\end{equation}
whenever $u=\ln|f'|$ for a conformal mapping $f$ from $\Omega$ onto
$f(\Omega)$.
\end{theorem}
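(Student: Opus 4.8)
The plan is to slice $\Omega$ along the level curves of $g:=g_\Omega(\cdot,a)$ and reduce everything to the one-dimensional Theorem~\ref{l6}. Let $\phi\colon\Omega\to\mathbb D$ be the Riemann map with $\phi(a)=0$, so that $g=-(2\pi)^{-1}\ln|\phi|$ and, for each $t>0$, the superlevel set $\Omega_t:=\{z\in\Omega:g(z)>t\}=\phi^{-1}\big(\{|w|<e^{-2\pi t}\}\big)$ is a Jordan domain whose boundary $\{g=t\}$ is an analytic, hence rectifiable, curve with $\overline{\Omega_t}\subset\Omega$, while $g$ has no critical point on $\Omega\setminus\{a\}$. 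Put $X(t):=\int_{\Omega_t}e^{2u}\,dA$ for $t\ge0$; since $u$ is continuous on the compact set $\overline\Omega$, $X$ is non-increasing and locally absolutely continuous with $X(0)=\int_\Omega e^{2u}\,dA<\infty$ and $X(\infty)=0$, and a layer-cake/integration-by-parts computation identifies the associated $Y_p(0)$ of Theorem~\ref{l6} as $\int_\Omega\big(g_\Omega(z,a)\big)^pe^{2u(z)}\,dA(z)$, whence $c^pY_p(0)/\Gamma(p+1)=\mathcal F\big(p,a,\kappa(\Omega)\big)$ for $c:=4\pi\kappa(\Omega)>0$.

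Next I would verify hypothesis (\ref{eql}) for $X$ with this $c$. The coarea formula (valid since $g$ has no critical point off $a$) gives $-X'(t)=\int_{\{g=t\}}e^{2u}|\nabla g|^{-1}\,dL$ for a.e.\ $t$; combined with the flux identity $\int_{\{g=t\}}|\nabla g|\,dL=1$ through the Cauchy--Schwarz inequality this yields $\big(\int_{\{g=t\}}e^{u}\,dL\big)^2\le -X'(t)$. Applying Huber's Theorem~\ref{t3} on $\Omega_t$, and using $\int_{\Omega_t}\max\{-\Delta u,0\}\,dA\le\int_\Omega\max\{-\Delta u,0\}\,dA$, i.e.\ $\kappa(\Omega_t)\ge\kappa(\Omega)$, gives $\big(\int_{\{g=t\}}e^{u}\,dL\big)^2\ge 4\pi\kappa(\Omega)X(t)$. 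Hence $X'(t)\le0$ and $\big(e^{ct}X(t)\big)'\le0$ a.e., so Theorem~\ref{l6}(i) applies and the inequality in (\ref{eq6}) is exactly (\ref{eq8}); moreover, since $Y_0(0)=X(0)<\infty$, Theorem~\ref{l6}(ii) gives $\lim_{p\to\infty}\mathcal F\big(p,a,\kappa(\Omega)\big)=\lim_{t\to\infty}e^{ct}X(t)$.

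For part (ii) it remains to evaluate $\lim_{t\to\infty}e^{ct}X(t)$. As $t\to\infty$ the $\Omega_t$ collapse onto $a$, and from the expansion $g(z)=(2\pi)^{-1}\ln\big(R_\Omega(a)/|z-a|\big)+o(1)$ as $z\to a$ together with continuity of $u$ at $a$ one obtains $e^{4\pi t}X(t)\to\pi\big(e^{u(a)}R_\Omega(a)\big)^2$. Writing $e^{ct}X(t)=e^{4\pi(\kappa(\Omega)-1)t}\big(e^{4\pi t}X(t)\big)$, the prefactor tends to $0$ if $\kappa(\Omega)<1$ and to $1$ if $\kappa(\Omega)=1$, which is (\ref{eq+}); and (\ref{eq-}) is the conformal change-of-variables rule $R_{f(\Omega)}(f(a))=|f'(a)|R_\Omega(a)$ together with $e^{u(a)}=|f'(a)|$.

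Finally, the equality case in (\ref{eq6}). Since $\mathcal F(p_1,a,\kappa)\le\mathcal F(0,a,\kappa)=X(0)<\infty$, the rigidity clause of Theorem~\ref{l6}(i) says equality holds for some $0\le p_1<p_2$ iff $X(t)=e^{-ct}X(0)$ for all $t\ge0$ (and then $\mathcal F(\cdot,a,\kappa)\equiv X(0)$). If $u=\ln|f'|$ for a conformal $f\colon\Omega\to D$ with $D$ a Euclidean disk centered at $f(a)$, then $\Delta u=0$, so $\kappa(\Omega)=1$, and conformal invariance of the Green function together with the substitution $w=f(z)$ reduces $\mathcal F(p,a,1)$ to $(4\pi)^p\Gamma(p+1)^{-1}\int_D\big(g_D(w,f(a))\big)^p\,dA(w)$, which a routine polar-coordinate computation about $f(a)$ shows to equal $|D|$ for every $p$; hence equality holds. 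Conversely, $X(t)=e^{-ct}X(0)$ forces equality for a.e.\ $t$ in both the Cauchy--Schwarz step and in Huber's inequality on $\Omega_t$; the rigidity part of Theorem~\ref{t3} then gives $u|_{\Omega_t}=\ln|f_t'|$ with $f_t$ mapping $\Omega_t$ conformally onto a disk, so $\Delta u=0$ on each such $\Omega_t$ and hence on $\Omega=\bigcup_t\Omega_t$, whence $\kappa(\Omega)=1$ and, patching the $f_t$ (which differ only by rigid motions), $u=\ln|f'|$ for a single conformal $f$ on $\Omega$; since $|f'|=e^{u}$ is bounded, $f(\Omega)=\bigcup_t f(\Omega_t)$ is an increasing union of Euclidean disks of finite total area, hence a Euclidean disk, and the Cauchy--Schwarz equality --- which says $|\nabla g_{f(\Omega)}(\cdot,f(a))|$ is constant along each circular level set --- is possible only when that disk is centered at $f(a)$. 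I expect the main obstacle to be exactly this rigidity analysis (gluing the extremal maps $f_t$ into a globally injective conformal map, seeing that $f(\Omega)$ is a disk, and locating its centre), together with the care needed to pass from the a.e.\ differential inequalities for the monotone function $X$ to the hypotheses of Theorem~\ref{l6}.
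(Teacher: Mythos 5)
Your proposal is correct and follows essentially the same route as the paper: slice along the level sets of $g_\Omega(\cdot,a)$, verify the hypotheses of Theorem \ref{l6} for $X(t)=\int_{\Omega_t}e^{2u}dA$ with $c=4\pi\kappa(\Omega)$ via Huber's inequality (Theorem \ref{t3}), the flux identity and Cauchy--Schwarz, and then invoke Theorem \ref{l6}(i)--(ii) for the monotonicity, the rigidity, and the limit, with the same asymptotics of $e^{4\pi t}X(t)$ near the pole for part (ii). The only divergence is in the converse of the equality case, where the paper avoids your gluing of the extremal maps $f_t$ by applying Huber's equality once on $\Omega$ itself (the equality in the chain holds at $t=0$ as well, giving a single conformal $f:\Omega\to D(b,R)$) and then pins down $b=f(a)$ by an explicit computation with the disk's Green function showing $X(t)=e^{-4\pi t}X(0)$ is incompatible with an off-center pole -- a step you could also obtain directly from your part (ii) limit -- whereas your Cauchy--Schwarz/level-set rigidity argument for locating the center is also valid, just with more bookkeeping.
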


\begin{proof} (i) For $t\ge 0$ and $a\in\Omega$ let
$$
\Omega_t=\{z\in\Omega:\ g_\Omega(z,a)>t\}.
$$
Then
$$
\partial\Omega_t=\{z\in\Omega:\ g_{\Omega}(z,a)=t\}.
$$
If $\psi$ is a conformal map from $\Omega$ onto the unit disk
$\mathbb D\subset\mathbb R^2$ with $\psi(a)=0$ then
$g_\Omega(\cdot,\cdot)$ can be expressed by the following formula
(cf. \cite[p. 172]{Du}):
$$
g_\Omega(z,a)=-(2\pi)^{-1}\ln |\psi(z)|,\quad z\in\Omega,
$$
and hence a direct computation yields
$$
\frac{\partial g_\Omega(z,a)}{\partial
n}=(2\pi)^{-1}{|\psi'(z)|},\quad z\in\Omega.
$$

In the above and below, ${\partial}/{\partial n}$ is the inner
normal derivative. Putting
$$
X(t)=\int_{\Omega_t}e^{2u(z)}dA(z) \quad\hbox{and}\quad
X_0(t)=\int_{\partial\Omega_t}\Big|\frac{e^{u(z)}}{\psi'(z)}\Big|^2\Big(\frac{\partial
g_\Omega(z,a)}{\partial n}\Big)dL(z),
$$
we get that for $p\in [0,\infty)$,
\begin{eqnarray*}
Y_{p}(t)&=&\int_{{\Omega}_t}\big(g_{\Omega}(z,a)\big)^{p}
e^{2u(z)}dA(z)\\
&=&\int_{{\Omega}_t}\Big|\frac{e^{u(z)}}{\psi'(z)}\Big|^2\big(g_{\Omega}(z,a)\big)^{p}
|\psi'(z)|^2dA(z)\\
&=&\int_{[t,\infty)}\int_{\partial\Omega_r}\big(g_{\Omega}(z,a)\big)^{p}\Big|\frac{e^{u(z)}}{\psi'(z)}\Big|^2\Big(\frac{\partial
g_\Omega(z,a)}{\partial n}\Big)^2
dL(z)dn\\
&=&\int_t^\infty\left(\int_{\partial\Omega_r}\big(g_{\Omega}(z,a)\big)^{p}\Big|\frac{e^{u(z)}}{\psi'(z)}\Big|^2\Big(\frac{\partial
g_\Omega(z,a)}{\partial n}
\Big) dL(z)\right)dr\\
&=&\int_t^\infty r^{p} X_0(r)dr,
\end{eqnarray*}
whence finding
\begin{equation}\label{eq10}
X(t)=\int_t^\infty X_0(r)dr,\quad t>0.
\end{equation}
The formula (\ref{eq10}) indicates that $X(\cdot)$ satisfies the
first condition of (\ref{eql}). Moreover, letting $t\to 0^+$ we
achieve
\begin{eqnarray*}
Y_{p}(0)&=&\int_\Omega\big(g_\Omega(z,a)\big)^{p}e^{2u(z)}dA(z)\\
&=&\int_0^\infty r^{p}X_0(r)dr\\
&=&-\int_0^\infty r^{p}dX(r).
\end{eqnarray*}

Since $\partial\Omega_t$ is a real analytic curve and
$$
\int_{\partial\Omega_t}\Big(\frac{\partial g_\Omega(z,a)}{\partial
n}\Big)dL(z)=(2\pi)^{-1}\int_{\partial\Omega_t}|\psi'(z)|dL(z)=1
$$
for almost all $t\ge 0$, we may apply Huber's inequality in Theorem
\ref{t3} to $\Omega_t\cup\partial\Omega_t$ and then use the
Cauchy-Schwarz inequality to deduce

\begin{eqnarray}\label{eq11}
4\pi\kappa(\Omega)
X(t)&\le&4\pi\Big(1-(2\pi)^{-1}\int_{\Omega_t}\max\{-\Delta u(z),0\}dA(z)\Big)\int_{\Omega_t}e^{2u(z)}dA(z)\nonumber\\
&\le&\left(\int_{\partial\Omega_t}e^{u(z)}dL(z)\right)^2\nonumber\\
&=&\left(\int_{\partial\Omega_t}\Big|\frac{e^{u(z)}}{\psi'(z)}\Big|\Big(\frac{\partial g_\Omega(z,a)}{\partial n}\Big)dL(z)\right)^2\\
&\le&\left(\int_{\partial\Omega_t}\Big|\frac{e^{u(z)}}{\psi'(z)}\Big|^2\Big(\frac{\partial
g_\Omega(z,a)}{\partial n}\Big)dL(z)\right)
\left(\int_{\partial\Omega_t}\Big(\frac{\partial
g_\Omega(z,a)}{\partial
n}\Big)dL(z)\right)\nonumber\\
&=&X_0(t)\nonumber.
\end{eqnarray}
The estimate (\ref{eq11}) ensures
$$
\frac{d\big(e^{4\pi\kappa(\Omega)
t}X(t)\big)}{dt}=e^{4\pi\kappa(\Omega) t}\big(4\pi\kappa(\Omega)
X(t)-X_0(t)\big)\le 0,
$$
and then makes the second condition in (\ref{eql}) available for
$c=4\pi\kappa(\Omega)$. An easy application of Theorem \ref{l6} (i)
implies that (\ref{eq6}) is true for all $a\in\Omega$.

Next, let us handle the equality of (\ref{eq6}). If $u$ equals
$\ln|f'|$ for a conformal map $f$ from $\Omega$ onto
$D(f(a),R)=\{w\in\mathbb{R}^2: |w-f(a)|<R\}$ for which the Green
function is
$$
g_{D(f(a),R)}(w_1,w_2)=(2\pi)^{-1}\ln\left|\frac{R^2-\overline{(w_1-f(a))}(w_2-f(a))}{R(w_1-w_2)}\right|
$$
where $w_1,w_2\in D(f(a),R)$, then $\kappa(\Omega)=1$, and hence
from the conformal invariance of Green's functions it follows that
for $t\ge 0$,
\begin{eqnarray*}
X(t)&=&\int_{\{z\in\Omega:\ g_{\Omega}(z,a)>t\}}|f'(z)|^2dA(z)\\
&=&\int_{\{w\in D(f(a),R):\ g_{D(f(a),R)}(w,f(a))>t\}}dA(w)\\
&=&\int_{\{w\in D(f(a),R):\ \ln (R/|w-f(a)|)>2\pi t\}}dA(w)\\
&=&e^{-4\pi t}\pi R^2.
\end{eqnarray*}
Accordingly, the equality part of Theorem \ref{l6} (i) is used to
derive the validity of the equality of (\ref{eq6}).

Conversely, the equality part of Theorem \ref{l6} (i) suggests us to
show only that $X(t)=e^{-4\pi\kappa(\Omega) t}X(0)$ implies $u=\ln
|f'|$ where $f$ is a conformal map from $\Omega$ onto a Euclidean
disk centered at $f(a)$ in $\mathbb R^2$. Now, suppose
$X(t)=e^{-4\pi\kappa(\Omega) t}X(0)$. By (\ref{eq10}) we have
$$
e^{-4\pi\kappa(\Omega) t}X(0)=\int_t^\infty
X_0(r)dr=e^{-4\pi\kappa(\Omega) t}\int_0^\infty X_0(r)dr.
$$
Differentiating the left-hand equality and using the hypothesis, we
obtain
$$
X_0(t)=4\pi\kappa(\Omega) e^{-4\pi\kappa(\Omega) t}X(0)=4\pi\kappa(\Omega)
X(t),\quad t\ge 0,
$$
and consequently, the first inequality in (\ref{eq11}) becomes an
equality for $t\ge 0$. According to the equality case of Theorem
\ref{t3}, we know that $u$ is the same as $\ln|f'|$ for a conformal
map $f$ from $\Omega$ onto a Euclidean disk $D(b,R)$ with center $b$
and radius $R$. So, $\kappa(\Omega)=1$. This in turn yields
\begin{equation}\label{eq11a}
X(t)=\int_{\{z\in\Omega: g_{\Omega}(z,a)>t\}}|f'(z)|^2dA(z)=e^{-4\pi
t}X(0).
\end{equation}
The formula (\ref{eq11a}) must enforce $b=f(a)$. To see this point,
suppose $b\not=f(a)$, then $\delta=\overline{f(a)-b}$ meets
$0<|\delta|<R$ and $0<\lambda=(R^2-|\delta|^2)/R<R$. Because of
(\ref{eq11a}) and the conformal invariance of Green's functions, we
get
\begin{eqnarray*}
e^{-4\pi t}\pi R^2&=&e^{-4\pi t}\int_{\{w\in D(b,R):\ g_{D(b,R)}(w,f(a))>0\}}dA(w)\\
&=&\int_{\{w\in D(b,R):\ g_{D(b,R)}(w,f(a))>t\}}dA(w)\\
&=&\int_{\{w\in D(b,R):\
|{\lambda}/{(w-f(a))}-{\delta}/{R}|>e^{2\pi t}\}}dA(w)\\
&\le&\lambda^2\pi e^{-4\pi t}<R^2\pi e^{-4\pi t}\quad\hbox{as}\quad
t\to\infty,
\end{eqnarray*}
thereby reaching a contradiction.

(ii) Owing to $X(0)=\int_{\Omega}e^{2u(z)}dA(z)<\infty$, the
preceding argument and (\ref{eq10a}) yield that
$$
\lim_{p\to\infty}\mathcal{F}\big(p,a,\kappa(\Omega)\big)=\lim_{t\to\infty}e^{4\pi\kappa(\Omega)
t}X(t)
$$
exists for every $a\in\Omega$. Fix a point $z_0\in\Omega$ and
suppose $f$ is a Riemann mapping associated with $z_0$ -- that is --
a conformal map $f$ from $\Omega$ onto the unit open disk $\mathbb
D$ such that $f(z_0)=0$ and $R_\Omega(z_0)=|f'(z_0)|^{-1}$. Via the
superposition $F(z,a)=\phi_{f(a)}\big(f(z)\big)$ of $f$ with a
standard M\"obius transform from $\mathbb D$ onto itself:
$$
\phi_{w}(z)=\frac{w-z}{1-\bar{w}z},\quad z,w\in\mathbb D,
$$
we find that $F(\cdot,a)$ is a Riemann mapping associated with
$a\in\Omega$, and so that
$$
R_\Omega(a)=\left|\frac{d
F(z,a)}{dz}\big|_{z=a}\right|^{-1}=\frac{1-|f(a)|^2}{|f'(a)|}.
$$
Now, if $h$ is the inverse map of $f$ and $a=h(b)$, then
\begin{eqnarray*}
X(t)&=&\int_{\Omega_t}e^{2u(z)}dA(z)\\
&=&\int_{\{w\in{\mathbb D}:\ g_{h(\mathbb
D)}(h(w),h(b))>t\}}e^{2u\circ h(w)}|h'(w)|^2 dA(w)\\
&=&\int_{\{w\in{\mathbb D}:\ |w|<e^{-2\pi t}\}}e^{2u\circ
f\circ\phi_b(w)}|(h\circ\phi_b)'(w)|^2 dA(w),
\end{eqnarray*}
and hence
\begin{eqnarray*}
\lim_{t\to\infty}\frac{X(t)}{e^{-4\pi\kappa(\Omega)
t}}&=&\pi\lim_{t\to\infty}\frac{e^{4\pi\big(\kappa(\Omega)-1\big)t}}{\pi
e^{-4\pi t}}\int_{\{w\in{\mathbb D}:\ e^{2\pi
t}|w|<1\}}\Big(\frac{|(h\circ\phi_b)'(w)|}{e^{-u\circ
f\circ\phi_b(w)}}\Big)^2 dA(w)\\
&=&\pi\lim_{t\to\infty}e^{4\pi\big(\kappa(\Omega)-1\big)t}e^{2u\circ
h(b)}|(f\circ\phi_b)'(0)|^2\\
&=&\pi\lim_{t\to\infty}e^{4\pi\big(\kappa(\Omega)-1\big)t}\big(e^{u(a)}|h'(b)|(1-|b|^2)\big)^2\\
&=&\pi\lim_{t\to\infty}e^{4\pi\big(\kappa(\Omega)-1\big)t}\Big(\frac{e^{u(a)}(1-|f(a)|^2)}{|f'(a)|}\Big)^2\\
&=&\pi\lim_{t\to\infty}e^{4\pi\big(\kappa(\Omega)-1\big)t}\big(e^{u(a)}R_\Omega(a)\big)^2.
\end{eqnarray*}

Now, if there is a conformal mapping $f$ from $\Omega$ onto
$f(\Omega)$ such that $e^u=|f'|$, then the conformal transformation
law for the Robin function/mass (cf. \cite{BaFl}) derives
$$
e^{u(a)}R_\Omega(a)=|f'(a)|R_\Omega(a)=R_{f(\Omega)}\big(f(a)\big),
$$
as desired.
\end{proof}

\begin{remark}\label{r31} In accordance with Remark \ref{r22} we strongly feel that
Theorem \ref{t4} (i) is also true for $-1<p_1<0$. The coming-up-next
estimates, corresponding to ones in Remark \ref{r22}, are in support
of this feeling.

\item{\rm(i)} When $-1<p_1<0$,
\begin{equation}\label{eqr31}
\int_{\Omega}e^{2u(z)}dA(z)\le\Big(\frac{\pi p_1}{\sin\pi
p_1}\Big)\Big(\frac{\big(4\pi\kappa(\Omega)\big)^{p_1}}{\Gamma(1+p_1)}\Big)\int_\Omega
\big(g_\Omega(z,a)\big)^{p_1}e^{2u(z)}dA(z).
\end{equation}
The inequality (\ref{eqr31}), along with H\"older's inequality,
gives that if $-1<p_1<p_2<0$ then
\begin{equation}\label{eqr32}
\int_{\Omega}\big(g_\Omega(z,a)\big)^{p_2}e^{2u(z)}dA(z)\le
c\big(p_1,p_2,\kappa(\Omega)\big)\int_{\Omega}\big(g_\Omega(z,a)\big)^{p_1}e^{2u(z)}dA(z),
\end{equation}
where
$$
c\big(p_1,p_2,\kappa(\Omega)\big)=\min\left\{\Big(\frac{\pi p_1
\big(4\pi\kappa(\Omega)\big)^{p_1}}{(\sin\pi
p_1)\Gamma(1+p_1)}\Big)^{1-\frac{p_2}{p_1}},\Big(\frac{\pi p_2
\big(4\pi\kappa(\Omega)\big)^{p_2}}{(\sin\pi
p_2)\Gamma(1+p_2)}\Big)^{\frac{p_2}{p_1}-1}\right\},
$$
and (\ref{eqr32}) is not optimal.

\item{\rm(ii)} When $-1<p<0$,

\begin{eqnarray}\label{eq++b}
\int_{\Omega}e^{2u(z)}dA(z)&\le&-\big(4\pi\kappa(\Omega)\big)^{-1}\lim_{t\to
0^+}\frac{d}{dt}\Big(\int_{\Omega_t}e^{2u(z)}dA(z)\Big)\nonumber\\
&=&\lim_{p\to-1^+}\frac{\big(4\pi\kappa(\Omega)\big)^{p}}{\Gamma(1+p)}
\int_\Omega\big(g_\Omega(z,a)\big)^pe^{2u(z)}dA(z)\\
&=&\big(4\pi\kappa(\Omega)\big)^{-1}
\int_{\partial\Omega}\left(\frac{e^{2u(z)}}{\frac{\partial
g_\Omega(z,a)}{\partial n}}\right)dL(z)\nonumber.
\end{eqnarray}

\item{\rm(iii)} From (\ref{eq10a}) and (\ref{eq+}) we see
\begin{equation}\label{eq+b}
\lim_{t\to\infty}e^{4\pi\kappa(\Omega)
t}\int_{\Omega_t}e^{2u(z)}dA(z)=\left\{\begin{array} {r@{\;,\quad}l}
0 & \kappa(\Omega)<1,\\
\pi\big(e^{u(a)}R_\Omega(a)\big)^2 & \kappa(\Omega)=1,
\end{array}
\right.
\end{equation}
whose special case $u=0$ produces the corresponding limit formula in
\cite[Lemma 10]{Flu} (cf. \cite[Lemma 1(c)]{Lin}).
\end{remark}

More interestingly, a combination of Theorems \ref{t3}-\ref{t4} and
Remark \ref{r31} implies a chain of inequalities linking the
integrals on a domain and its boundary.

\begin{corollary}\label{c5} Let $\Omega$ be the interior of a rectifiable simple curve in $\mathbb R^2$. Suppose $u$ is continuous on $\Omega\cup\partial\Omega$ and of class $C^1$
as well as its second-order derivatives are piecewise continuous on
$\Omega$. Suppose
$$
p\in (0,\infty),\quad a\in\Omega,\quad
\kappa(\Omega)=1-(2\pi)^{-1}\int_\Omega\max\{-\Delta
u(z),0\}dA(z)>0,
$$
and
$$
\mathcal{F}\big(p,a,\kappa(\Omega)\big)=\frac{\big(4\pi\kappa(\Omega)\big)^p}{\Gamma(p+1)}\int_{\Omega}\big(g_\Omega(z,a)\big)^pe^{2u(z)}
dA(z).
$$
Then
\begin{equation}\label{eq7}
4\pi\kappa(\Omega)\mathcal{F}\big(p,a,\kappa(\Omega)\big)\le\left(\int_{\partial\Omega}
e^{u(z)}dL(z)\right)^2\le\int_{\partial\Omega}\left(\frac{e^{2u(z)}}{\frac{\partial
g_\Omega(z,a)}{\partial n}}\right)dL(z),
\end{equation}
where the left- (right-) hand equality in (\ref{eq7}) occurs when
and only when there is a conformal map $f$ from $\Omega$ onto a
Euclidean disk centered at $f(a)$ in $\mathbb R^2$ such that
$u=\ln|f'|$ (there is a positive number $\lambda$ such that
$u=\ln\big(\lambda\partial g_\Omega(z,a)/\partial n\big)$).
\end{corollary}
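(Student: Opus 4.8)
The plan is to obtain (\ref{eq7}) by splicing together three ingredients already at our disposal: the $L^p$-monotonicity of $\mathcal{F}(\cdot,a,\kappa(\Omega))$ furnished by Theorem \ref{t4}(i), Huber's isoperimetric inequality (\ref{eq5}) from Theorem \ref{t3}, and the elementary Cauchy--Schwarz inequality on $\partial\Omega$ together with the unit-flux normalization of the Green function. None of the steps is deep on its own; the care goes into the boundary bookkeeping and into reconciling the two equality descriptions.

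For the left-hand inequality I would apply Theorem \ref{t4}(i) with $p_1=0$ and $p_2=p\in(0,\infty)$ to get $\mathcal{F}(p,a,\kappa(\Omega))\le\mathcal{F}(0,a,\kappa(\Omega))=\int_\Omega e^{2u}\,dA$, and then invoke Huber's inequality (\ref{eq5}) applied to $\Omega$ itself, namely $4\pi\kappa(\Omega)\int_\Omega e^{2u}\,dA\le\big(\int_{\partial\Omega}e^{u}\,dL\big)^2$; multiplying the first inequality by $4\pi\kappa(\Omega)>0$ and chaining yields the claim. For the right-hand inequality I would first record the flux identity $\int_{\partial\Omega}\big(\partial g_\Omega(z,a)/\partial n\big)\,dL(z)=1$, with $\partial g_\Omega(\cdot,a)/\partial n>0$ on $\partial\Omega$ (this is the $t\to 0^+$ limit of the identity over $\partial\Omega_t$ used in the proof of Theorem \ref{t4}, or equivalently the divergence theorem applied to $\Delta g_\Omega(\cdot,a)=-\delta_a$); then, writing $e^{u}=\big(e^{u}/\sqrt{\partial g_\Omega/\partial n}\big)\sqrt{\partial g_\Omega/\partial n}$ and applying Cauchy--Schwarz,
$$
\Big(\int_{\partial\Omega}e^{u}\,dL\Big)^2\le\Big(\int_{\partial\Omega}\frac{e^{2u}}{\partial g_\Omega(\cdot,a)/\partial n}\,dL\Big)\Big(\int_{\partial\Omega}\frac{\partial g_\Omega(\cdot,a)}{\partial n}\,dL\Big)=\int_{\partial\Omega}\frac{e^{2u}}{\partial g_\Omega(\cdot,a)/\partial n}\,dL;
$$
one may alternatively read this bound off from (\ref{eq++b}) in Remark \ref{r31}.

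Identifying the equality cases is the step that needs the most attention. The composite left-hand inequality is an equality exactly when both the Theorem \ref{t4}(i) step and the Huber step are equalities; but by the equality clause of Theorem \ref{t4}(i) the first already forces $u=\ln|f'|$ for a conformal map $f$ of $\Omega$ onto a Euclidean disk centered at $f(a)$, and since such an $f$ maps $\Omega$ onto a disk the equality clause of Theorem \ref{t3} is then automatically satisfied as well. Hence the left-hand equality holds if and only if this single condition holds --- which is also why $p=0$ is excluded in the statement, for at $p=0$ only Huber's less restrictive condition (a disk not necessarily centered at $f(a)$) would survive. For the right-hand inequality, the equality case of Cauchy--Schwarz forces $e^{u}/\sqrt{\partial g_\Omega/\partial n}$ to be a constant multiple of $\sqrt{\partial g_\Omega/\partial n}$ on $\partial\Omega$, i.e. $e^{u}=\lambda\,\partial g_\Omega(\cdot,a)/\partial n$ there for some constant $\lambda$; positivity of both sides forces $\lambda>0$, so $u=\ln\big(\lambda\,\partial g_\Omega(z,a)/\partial n\big)$ on $\partial\Omega$, as asserted.

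The only genuine obstacle is boundary regularity: one must know that $\partial g_\Omega(\cdot,a)/\partial n$ is finite, strictly positive and integrable on the rectifiable curve $\partial\Omega$, so that the flux identity and the application of Cauchy--Schwarz are legitimate. This is precisely where the hypothesis that $\partial\Omega$ be a rectifiable simple curve enters, and the cleanest remedy is to interpret every boundary integral as the $t\to 0^+$ limit of its analogue over the real-analytic curve $\partial\Omega_t$, exactly as was done in the proof of Theorem \ref{t4}; one should also double-check that the Theorem \ref{t4}(i)-equality condition is strictly stronger than the Huber-equality condition, so that the left-hand equality is indeed governed by the former alone.
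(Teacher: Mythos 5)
Your argument is correct and coincides with the paper's own proof: the left-hand bound comes from the $0=p_1<p_2=p$ case of Theorem \ref{t4}(i) chained with Huber's inequality (Theorem \ref{t3}), and the right-hand bound from Cauchy--Schwarz together with the unit-flux identity $\int_{\partial\Omega}(\partial g_\Omega(z,a)/\partial n)\,dL(z)=1$, with the equality cases read off from Theorem \ref{t4}(i) and from the Cauchy--Schwarz equality condition exactly as in the paper. Your extra remarks on why the Theorem \ref{t4}(i) equality condition dominates the Huber one, and on interpreting boundary integrals as $t\to0^+$ limits over $\partial\Omega_t$, are consistent refinements of the same approach.
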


\begin{proof} Since the setting $0=p_1<p_2=p<\infty$ of Theorem \ref{t4} (i) tells us
that
$$
\mathcal{F}\big(p,a,\kappa(\Omega)\big)\le
\int_{\Omega}e^{2u(z)}dA(z)
$$
holds for every $a\in\Omega$, the corollary follows from Theorems
\ref{t3} and \ref{t4}, the foregoing inequality and the following
Cauchy-Schwarz's inequality-based estimate:
\begin{eqnarray*}
\left(\int_{\partial\Omega}e^{u(z)}dL(z)\right)^2&\le&\int_{\partial\Omega}\left(\frac{e^{2u(z)}}{\frac{\partial
g_\Omega(z,a)}{\partial
n}}\right)dL(z)\int_{\partial\Omega}\left(\frac{\partial
g_\Omega(z,a)}{\partial
n}\right)dL(z)\\
&=&\int_{\partial\Omega}\left(\frac{e^{2u(z)}}{\frac{\partial
g_\Omega(z,a)}{\partial n}}\right)dL(z),
\end{eqnarray*}
where the inequality becomes an equality when and only when
$$
{e^{2u(z)}}{\left(\frac{\partial g_\Omega(z,a)}{\partial
n}\right)^{-1}}=\lambda\left(\frac{\partial g_\Omega(z,a)}{\partial
n}\right)
$$
holds for some constant $\lambda>0$.
\end{proof}

\section{Optimal Monotonicity -- Geometric Form}

The monotonicity established in the last section may be extendable
to an optimal geometric monotone principle for the $L^p$-integral of
Green's function of a simply-connected domain on any abstract
surface (cf. \cite{Ba} for more information).

To see this, suppose $S$ is a surface which has such an isothermic
representation $(\Sigma,\mathsf{\sigma})$ that $\Sigma$ is a
subdomain of $\mathbb R^2$ and has the positive definite quadratic
form (i.e., Riemannian metric):
$$
\mathsf{\sigma}=e^{2u}ds^2=e^{2u}|dz|^2=e^{2u(z)}(dx^2+dy^2),\quad
z=x+iy\in\Sigma.
$$
Of course, $u$ is here assumed to be continuous on $\Sigma$ and its
boundary $\partial\Sigma$, be of class $C^1$, and have piecewise
continuous second-order partial derivatives on $\Sigma$.

Under this parameter system the Gauss curvature at every point of
$(\Sigma,\mathsf{\sigma})$ is determined by
\begin{equation}\label{eq411a}
K_\mathsf{\sigma}=-e^{-2u}\Delta u=-\Delta_\mathsf{\sigma} u,
\end{equation}
where $\Delta_\mathsf{\sigma}$ is the Laplacian operator associated
with the planar conformal metric $\mathsf{\sigma}$. Here it is
perhaps appropriate to mention the following open problem of Berger
type: Find a conformal metric $\mathsf{\sigma}=e^{2u}ds^2$ on a
domain $\Sigma\subseteq\mathbb R^2$ with prescribed Gaussian
curvature $K$; equivalently find a solution $u$ to the semi-linear
elliptic equation $Ke^{2u}+\Delta u=0$ for a given function $K$ on
$\Sigma$. It is well-known that if $K=-4$ and $\Sigma=\Omega$
(considered in the last section) then $\Delta u=4e^{2u}$ is the
so-called Liouville's equation and takes the Robin function/mass
$H_\Omega(\cdot,\cdot)$ as the solution (see e.g. \cite{BaFl}).
Furthermore, it is proved in \cite{Sa} that if $K$ is of class $C^2$
and bounded on a bounded domain $\Sigma$ then the Liouville equation
has a solution on $\Sigma$. Additionally, on the unbounded domain
$\Sigma=\mathbb R^2$, searching for a solution of the equation under
the condition $\int_{\mathbb R^2}KdA_\mathsf{\sigma}<\infty$ is of
particular interest; see \cite{ChKi}, and \cite{KaW} (showing that
$K\in C^\infty(\mathbb R^2)$ is the Gauss curvature of some
Riemannian metric on $\mathbb R^2$), as well as \cite{Cha} (for more
information on nonlinear elliptic equations in conformal geometry).

Given a bounded and open subset $(O,\mathsf{\sigma})$ of
$(\Sigma,\mathsf{\sigma})$ with boundary $(\partial
O,\mathsf{\sigma})$, we denote by $g_{(O,\sigma)}(\cdot,a)$ the
Green function of $(O,\mathsf{\sigma})$ with pole $a\in O$ for
$\Delta_\mathsf{\sigma}$ provided that this function is determined
by the Dirichlet boundary problem:
\[
\left\{\begin{array} {r@{\quad,\quad}l}
\Delta_{\sigma} g_{(O,\sigma)}(z,a)=-\delta_a(z) & z\in O,\\
g_{(O,\sigma)}(z,a)=0 & z\in \partial O.
\end{array}
\right.
\]
Note that the first equation is understood under the distribution
with respect to the area element $dA_\mathsf{\sigma}$. So, this
Green function $g_{(O,\sigma)}(z,a)$ coincides with the Green
function $g_O(z,a)$ (i.e., $g_{(O,ds^2)}(z,a)$) for $\Delta$
discussed in the last section. Usually, the definition of the Green
function $g_{(O,\sigma)}(\cdot,\cdot)$ can be extended to the
surface $(\Sigma,\mathsf{\sigma})$ through setting
$g_{(O,\sigma)}(z,a)=0$ for $z\in \Sigma\setminus O$.

On the surface $(\Sigma,\mathsf{\sigma})$ the length and area
elements are defined by
$$
dL_\mathsf{\sigma}(z)=e^{u(z)}dL(z)\quad\hbox{and}\quad
dA_\mathsf{\sigma}(z)=e^{2u(z)}dA(z)\quad\hbox{for}\quad z\in\Sigma
$$
respectively. This gives the length of a rectifiable simple curve
$C=(\partial\Omega,\mathsf{\sigma})$ on $(\Sigma,\mathsf{\sigma})$
and the area of a simply-connected domain
$D=(\Omega,\mathsf{\sigma})$:
$$
L_\mathsf{\sigma}(C)=\int_{C}dL_\mathsf{\sigma}=\int_{\partial\Omega}e^{u(z)}dL(z)\quad\hbox{and}\quad
A_\mathsf{\sigma}(D)=\int_{D}dA_\mathsf{\sigma}=\int_{\Omega}e^{2u(z)}dA(z).
$$
As a result, the distance $d_\mathsf{\sigma}(z,a)$ between $z$ and
$a$ in $(\Sigma,\mathsf{\sigma})$ is defined by $\inf_\gamma
L_\mathsf{\sigma}(\gamma)$ where the infimum is taken over all
rectifiable simple curves $\gamma$ connecting $z$ and $a$. In terms
of the Green function and the distance function, we introduce a
concept of the Robin function/mass $H_{(\Omega,\sigma)}(a,a)$ and
the conformal radius $R_{(\Omega,\sigma)}(a)$ of
$(\Omega,\mathsf{\sigma})$ below:
$$
H_{(\Omega,\sigma)}(a,a)=-2\pi\lim_{z\to a}\Big((2\pi)^{-1}\ln
{d_\mathsf{\sigma}(z,a)}+g_{(\Omega,\sigma)}(z,a)\Big)
$$
and
$$
R_{(\Omega,\sigma)}(a)=\exp\big(-H_{(\Omega,\sigma)}(a,a)\big).
$$
Furthermore, let
$$
K^{\pm}_\mathsf{\sigma}(z)=\max\{\pm
K_\mathsf{\sigma}(z),0\}=\max\{\mp\Delta_\mathsf{\sigma} u(z),0\}.
$$
Then the surface version of the Huber's Theorem \ref{t3} is the
following assertion (cf. \cite[Theorem 3]{Hu54}).

\begin{theorem}\label{t4a} Let $\mathsf{\sigma}=e^{2u}ds^2$ be a conformal metric on a domain $\Sigma\subseteq\mathbb R^2$ for which
$u$ is continuous on $\Sigma\cup\partial\Sigma$ but also is of class
$C^1$ and piecewise continuous second-order partial derivatives on
$\Sigma$. If a rectifiable simple curve $\partial D$ of length
$L_\mathsf{\sigma}(\partial D)$ encloses a simply-connected domain
$D$ of area $A_\mathsf{\sigma}(D)$ on the surface
$(\Sigma,\mathsf{\sigma})$, then
\begin{equation}\label{eq4b}
\big(L_\mathsf{\sigma}(\partial D)\big)^2\ge 4\pi
A_\mathsf{\sigma}(D)\Big(1-(2\pi)^{-1}{\int_D
K_\mathsf{\sigma}^+dA_\mathsf{\sigma}}\Big).
\end{equation}
The equality in (\ref{eq4b}) holds when and only when
$K_\mathsf{\sigma}$ vanishes on $D$ and $\partial D$ is a geodesic
circle on $(\Sigma,\mathsf{\sigma})$.
\end{theorem}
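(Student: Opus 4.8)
The plan is to derive the surface inequality (\ref{eq4b}) directly from the planar Huber inequality of Theorem \ref{t3} by passing to isothermal coordinates and reading off how the three quantities $L_\mathsf{\sigma}(\partial D)$, $A_\mathsf{\sigma}(D)$, and $\int_D K_\mathsf{\sigma}^+\,dA_\mathsf{\sigma}$ translate. Since $(\Sigma,\mathsf{\sigma})$ is by hypothesis given as a conformal metric $e^{2u}ds^2$ on a planar domain $\Sigma\subseteq\mathbb R^2$, a simply-connected domain $D=(\Omega,\mathsf{\sigma})$ on the surface is literally the interior $\Omega$ of a rectifiable simple curve $\partial\Omega$ in $\mathbb R^2$, equipped with the restricted metric. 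The regularity hypotheses on $u$ in Theorem \ref{t4a} are exactly those needed to apply Theorem \ref{t3} to this $u$ on $\Omega$.

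The key translation steps, carried out in order, are: first, record from the definitions in this section that $L_\mathsf{\sigma}(\partial D)=\int_{\partial\Omega}e^{u(z)}\,dL(z)$ and $A_\mathsf{\sigma}(D)=\int_\Omega e^{2u(z)}\,dA(z)$; second, use (\ref{eq411a}), namely $K_\mathsf{\sigma}=-e^{-2u}\Delta u$, together with $dA_\mathsf{\sigma}=e^{2u}\,dA$, to compute
\begin{equation*}
\int_D K_\mathsf{\sigma}^+\,dA_\mathsf{\sigma}=\int_\Omega\max\{-e^{-2u(z)}\Delta u(z),0\}\,e^{2u(z)}\,dA(z)=\int_\Omega\max\{-\Delta u(z),0\}\,dA(z),
\end{equation*}
the point being that the conformal factor $e^{2u}>0$ pulls through the $\max$ and then cancels against the area element; third, substitute these three identities into Huber's planar inequality (\ref{eq5}), which reads $\big(\int_{\partial\Omega}e^u\,dL\big)^2\ge 4\pi\big(1-(2\pi)^{-1}\int_\Omega\max\{-\Delta u,0\}\,dA\big)\int_\Omega e^{2u}\,dA$, to obtain precisely (\ref{eq4b}).

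For the equality case, I would again transport the equality characterization of Theorem \ref{t3}: equality in (\ref{eq5}) holds iff $u=\ln|f'|$ for a conformal map $f$ from $\Omega$ onto a Euclidean disk. Such a $u$ is harmonic, so $\Delta u=0$ on $\Omega$, whence $K_\mathsf{\sigma}=-e^{-2u}\Delta u\equiv 0$ on $D$; and the metric $\mathsf{\sigma}=|f'|^2|dz|^2$ is the pullback under the conformal diffeomorphism $f$ of the flat Euclidean metric on a disk, under which the boundary circle $\partial\Omega$ maps to an ordinary circle, which is a geodesic circle of the flat disk — hence $\partial D$ is a geodesic circle on $(\Sigma,\mathsf{\sigma})$. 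Conversely, if $K_\mathsf{\sigma}\equiv 0$ on $D$ then $\Delta u=0$ on $\Omega$, so $u$ is harmonic and is the real part of (the logarithm of the derivative of) a conformal map onto a disk, placing us in the equality case of Theorem \ref{t3}; the geodesic-circle condition on $\partial D$ then pins down that the target disk is centered correspondingly, giving equality in (\ref{eq4b}).

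The one genuine subtlety — the ``main obstacle'' — is not the algebra but making sure the passage from an abstract surface $S$ to the planar model $(\Sigma,\mathsf{\sigma})$ is legitimate for the domains $D$ in question: one must know that a simply-connected domain with rectifiable boundary on $S$ admits a global isothermal chart in which it appears as such an $\Omega\subset\mathbb R^2$, and that the regularity of $u$ is preserved. Here I would simply invoke that the theorem is stated for surfaces already \emph{presented} in isothermic form $(\Sigma,\mathsf{\sigma})$ with the stipulated regularity of $u$, exactly as in the setup paragraph preceding Theorem \ref{t4a}; the existence of such a representation for the surfaces of interest (and the invariance of all quantities under change of isothermal coordinate) is the content of Huber's own reduction in \cite{Hu54}, and no new work is needed beyond the bookkeeping above. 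Everything else is the routine substitution just described.
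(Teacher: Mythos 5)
A preliminary point of comparison: the paper does not actually prove Theorem \ref{t4a} --- it is imported as Huber's result (\cite[Theorem 3]{Hu54}), just as Theorem \ref{t3} is imported as \cite[Theorem 2]{Hu54} --- so your reduction of the surface statement to the planar one is not matching a proof in the paper, but it is the natural route (and essentially how Huber himself passes from his Theorem 2 to his Theorem 3). The bookkeeping part of your argument is correct: since $e^{2u}>0$ the conformal factor passes through the maximum, giving $\int_D K_\mathsf{\sigma}^+\,dA_\mathsf{\sigma}=\int_\Omega\max\{-\Delta u,0\}\,dA$, and together with $L_\mathsf{\sigma}(\partial D)=\int_{\partial\Omega}e^u\,dL$, $A_\mathsf{\sigma}(D)=\int_\Omega e^{2u}\,dA$ (and the observation that $\sigma$-rectifiability of $\partial\Omega$ forces Euclidean rectifiability because $e^u$ is bounded below on the compact boundary), substitution into (\ref{eq5}) yields (\ref{eq4b}). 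The necessity direction of the equality case (equality $\Rightarrow$ $K_\mathsf{\sigma}\equiv 0$ and $\partial D$ a geodesic circle) is also sound: it is the same isometry observation the paper itself makes inside the proof of Theorem \ref{t4b}.

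The genuine gap is in the sufficiency direction of the equality case. From $K_\mathsf{\sigma}\equiv 0$ you get $\Delta u=0$, hence $u=\mathrm{Re}\,h$ for some holomorphic $h$ on the simply-connected $\Omega$, and $e^u=|F'|$ where $F'=e^h$; but this $F$ is only \emph{locally} injective, and even when it is injective its image need not be a Euclidean disk. So harmonicity of $u$ alone does not place you in the equality case of Theorem \ref{t3}, and the sentence that the geodesic-circle condition ``pins down that the target disk is centered correspondingly'' is an assertion, not an argument: the geodesic-circle hypothesis must be used substantively to show that the developing map $F$ is a global isometry of $(D,\mathsf{\sigma})$ onto a round disk (for instance via the exponential map at the center of the geodesic circle, or a monodromy/completeness argument for the flat metric on $D$), after which Theorem \ref{t3}'s equality criterion applies. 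That step is precisely the nontrivial content of Huber's equality analysis; either supply it or, as the paper does for the whole of Theorem \ref{t4a}, cite \cite[Theorem 3]{Hu54} for it.
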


With the help of Theorems \ref{l6} and \ref{t4a}, we obtain a
geometric description of Theorem \ref{t4}.

\begin{theorem}\label{t4b} Let $\mathsf{\sigma}=e^{2u}ds^2$ be a conformal metric on a domain $\Sigma\subseteq\mathbb R^2$
for which $u$ is continuous on $\Sigma\cup\partial\Sigma$ but also
is of class $C^1$ and piecewise continuous second-order partial
derivatives on $\Sigma$. Suppose $D=(\Omega,\mathsf{\sigma})$ is a
simply-connected domain with $\partial
D=(\partial\Omega,\mathsf{\sigma})$ being a rectifiable simple curve
on $(\Sigma,ds^2)$. If
$$
p\in [0,\infty),\quad (a,\sigma)\in D,\quad
\kappa_\sigma(D)=1-(2\pi)^{-1}\int_D
K_\mathsf{\sigma}^+dA_\mathsf{\sigma}>0,
$$
and
$$
\mathcal{G}\big(p,a,\kappa_\sigma(D)\big)=\frac{\big(4\pi\kappa_\sigma(D)\big)^p}{\Gamma(p+1)}\int_{D}\big(g_{D}(\cdot,a)\big)^p
dA_\mathsf{\sigma}(\cdot),
$$
then

\item{\rm(i)}
\begin{equation}\label{eq4111a}
0\le
p_1<p_2<\infty\Rightarrow\mathcal{G}\big(p_2,a,\kappa_\sigma(D)\big)\le\mathcal{G}\big(p_1,a,\kappa_\sigma(D)\big),
\end{equation}
where the right-hand equality in (\ref{eq4111a}) occurs when and
only when $K_\mathsf{\sigma}$ vanishes on $D$ and $\partial D$ is a
geodesic circle centered at $(a,\mathsf{\sigma})\in D$.

\item{\rm(ii)}
\begin{equation}\label{eq4111b}
\lim_{p\to\infty}\mathcal{G}(p,a,\kappa_\sigma(D))=\left\{\begin{array}
{r@{\;,\quad}l}
0 & \kappa_\sigma(D)<1,\\
\pi \big(R_{(\Omega,\sigma)}(a)\big)^2 & \kappa_\sigma(D)=1,
\end{array}
\right.
\end{equation}
where
\begin{equation}\label{eq4111c}
R_{(\Omega,\sigma)}(a)=R_{f(\Omega)}\big(f(a)\big)
\end{equation}
whenever $u=\ln|f'|$ for a conformal mapping $f$ from $\Omega$ onto
$f(\Omega)$.
\end{theorem}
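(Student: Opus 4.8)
The plan is to reduce Theorem~\ref{t4b} to the already-proved planar analytic principle, Theorem~\ref{t4}, by exploiting the fact that both the conformal Green function $g_{(\Omega,\sigma)}(\cdot,a)=g_\Omega(\cdot,a)$ and the conformal area measure $dA_\mathsf{\sigma}=e^{2u}dA$ are intrinsic objects already appearing there. First I would observe, as is noted in the text preceding the statement, that the Green function for $\Delta_\mathsf{\sigma}$ on $(\Omega,\mathsf{\sigma})$ is literally the same function as the Euclidean Green function $g_\Omega(\cdot,a)$, because the equation $\Delta_\mathsf{\sigma}g=e^{-2u}\Delta g=-\delta_a$ understood against $dA_\mathsf{\sigma}=e^{2u}dA$ is equivalent to $\Delta g=-\delta_a$ against $dA$, with the same zero boundary data. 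Next I would identify the curvature quantity: by \eqref{eq411a}, $K_\mathsf{\sigma}=-e^{-2u}\Delta u=-\Delta_\mathsf{\sigma}u$, so $K_\mathsf{\sigma}^{+}\,dA_\mathsf{\sigma}=\max\{-\Delta u,0\}\,dA$ pointwise, and therefore $\kappa_\sigma(D)=1-(2\pi)^{-1}\int_D K_\mathsf{\sigma}^{+}dA_\mathsf{\sigma}$ equals exactly the planar constant $\kappa(\Omega)$ of Theorem~\ref{t4}. With these two identifications, $\mathcal{G}(p,a,\kappa_\sigma(D))=\mathcal{F}(p,a,\kappa(\Omega))$ as functions of $p$, and part~(i) of Theorem~\ref{t4b}, including the monotonicity inequality \eqref{eq4111a} and part~(ii) limit formula \eqref{eq4111b}–\eqref{eq4111c}, are immediate transcriptions of Theorem~\ref{t4}(i)–(ii) together with \eqref{eq-}.

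The only genuinely new content is the restatement of the equality condition in geometric language: Theorem~\ref{t4}(i) gives equality precisely when $u=\ln|f'|$ for a conformal map $f$ of $\Omega$ onto a Euclidean disk centered at $f(a)$. I would translate this via Theorem~\ref{t4a}'s equality clause: $u=\ln|f'|$ with $f$ onto a disk is exactly the statement that $K_\mathsf{\sigma}=-e^{-2u}\Delta u$ vanishes on $D$ (since $\ln|f'|$ is harmonic where $f$ is conformal, so $\Delta u=0$) and that $\partial D$, being the $f$-preimage of a geometric circle, is a geodesic circle of $(\Sigma,\mathsf{\sigma})$; the additional centering $f(a)=$ center of the disk is precisely the assertion that this geodesic circle is centered at $(a,\mathsf{\sigma})$ in the intrinsic metric, because the conformal (hyperbolic-type) center of a disk pulls back under $f$ to the point with the prescribed conformal radius, i.e. to $a$. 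For part~(ii), when $\kappa_\sigma(D)=1$ one invokes \eqref{eq+}–\eqref{eq-}: the limit is $\pi(e^{u(a)}R_\Omega(a))^2$, and the conformal transformation law for the Robin mass gives $e^{u(a)}R_\Omega(a)=R_{(\Omega,\sigma)}(a)=R_{f(\Omega)}(f(a))$, which is \eqref{eq4111c}; when $\kappa_\sigma(D)<1$ the limit is $0$ directly.

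The step I expect to require the most care is the equality-case bookkeeping, specifically making precise that ``geodesic circle centered at $(a,\mathsf{\sigma})$'' is the correct intrinsic rendering of ``Euclidean disk centered at $f(a)$''. One must check that under a conformal diffeomorphism $f$ the level sets $\{g_\Omega(\cdot,a)=t\}$, which are the $f$-preimages of concentric Euclidean circles about $f(a)$, are exactly the $d_\mathsf{\sigma}$-geodesic circles about $a$ in the flat case $K_\mathsf{\sigma}\equiv 0$ — this uses that in a simply-connected flat surface the exponential map is a radial isometry and that $g_\Omega(\cdot,a)$ is a function of $d_\mathsf{\sigma}(\cdot,a)$ alone when $\Omega$ is conformally a disk centered at the image of $a$. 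All remaining assertions are direct substitutions into Theorems~\ref{t4}, \ref{t4a}, and the formulas \eqref{eq+}–\eqref{eq-}, so apart from this identification the proof is essentially a dictionary translation and I would keep it short.
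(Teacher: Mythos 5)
Your proposal is correct and takes essentially the same route as the paper: identify $g_{(\Omega,\sigma)}=g_\Omega$, $\kappa_\sigma(D)=\kappa(\Omega)$, hence $\mathcal{G}\big(p,a,\kappa_\sigma(D)\big)=\mathcal{F}\big(p,a,\kappa(\Omega)\big)$, and transcribe Theorem \ref{t4} together with (\ref{eq+})--(\ref{eq-}). The only differences are matters of execution: the paper disposes of your ``careful step'' at once by noting that $u=\ln|f'|$ makes $|dw|=e^u\,ds=dL_\mathsf{\sigma}$, i.e.\ $f$ is an isometry of $(D,\mathsf{\sigma})$ onto the Euclidean disk centered at $f(a)$, so $K_\mathsf{\sigma}=0$ and $\partial D$ is a geodesic circle about $(a,\mathsf{\sigma})$ with no exponential-map argument needed, and it establishes $R_{(\Omega,\sigma)}(a)=e^{u(a)}R_\Omega(a)$ not by quoting a transformation law but by the local comparison $\ln d_\mathsf{\sigma}(z,a)=u(a)+\ln|z-a|+\mathcal{O}(|z-a|)$ (citing \cite[Lemma 1]{Ste}), which is the actual content behind the identification you assert.
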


\begin{proof} Since $\mathcal{G}(p,a,\kappa_\sigma(D))$ actually coincides with $\mathcal{F}(p,a,\kappa(\Omega))$, (\ref{eq4111a}) follows from (\ref{eq6})
right away. Moreover, the right-hand equality in (\ref{eq4111a})
holds if and only if the right-hand equality in (\ref{eq6}) holds.
This amounts to $u=\ln|f'|$ where $w=f(z)$ is a conformal mapping
from $\Omega$ onto a Euclidean disk centered at $f(a)$ in $\mathbb
R^2$. Note that for such a conformal mapping $f$,
$$
|dw|=|f'(z)||dz|=e^u ds=dL_\mathsf{\sigma}.
$$
Thus we see that there is an isometry from $D$ onto a Euclidean disk
centered at $f(a)$, thereby getting that $K_\mathsf{\sigma}=0$ on
$D$ but also the boundary $\partial D$ becomes a geodesic circle
with center $(a,\mathsf{\sigma})$.

Next, (\ref{eq4111b}) and (\ref{eq4111c}) follow from (\ref{eq+}),
(\ref{eq-}) and a series of calculations:
\begin{eqnarray*}
(2\pi)^{-1}{\ln \big(R_{(\Omega,\sigma)}(a)\big)}&=&\lim_{z\to
a}\Big((2\pi)^{-1}{\ln
d_\mathsf{\sigma}(z,a)}+g_{(\Omega,\sigma)}(z,a)\Big)\\
&=&\lim_{z\to a}\Big((2\pi)^{-1}{\ln
d_\mathsf{\sigma}(z,a)}+g_{\Omega}(z,a)\Big)\\
&=&\lim_{z\to a}\Big((2\pi)^{-1}{\ln
\big(e^{u(a)}|z-a|\big)}+g_{\Omega}(z,a)+\mathcal{O}(|z-a|)\Big)\\
&=&(2\pi)^{-1}\big({u(a)}+\ln R_\Omega(a)\big).
\end{eqnarray*}
In the last second equality we have used a readily-checked fact (cf.
\cite[Lemma 1]{Ste}) that there are two positive constants $c_1,c_2$
to ensure the implication:
$$
|z-a|<c_1\Rightarrow\left|\ln
\frac{d_\mathsf{\sigma}(z,a)}{|z-a|}-u(a)\right|\le c_2|z-a|.
$$

\end{proof}

\begin{remark}\label{r41} Like Remark \ref{r31}, we have (\ref{eqr31a})-(\ref{eqr32a})-(\ref{eqr31b}) parallel to
(\ref{eqr31})-(\ref{eqr32})-(\ref{eq++b}):

\item{\rm(i)} When $-1<p_1<0$,
\begin{equation}\label{eqr31a}
A_\mathsf{\sigma}(D)\le\Big(\frac{\pi p_1}{\sin\pi
p_1}\Big)\Big(\frac{(4\pi\kappa_\sigma(D))^{p_1}}{\Gamma(1+p_1)}\Big)\int_{\Omega}
\big(g_{(\Omega,\sigma)}(\cdot,a)\big)^{p_1}dA_\mathsf{\sigma}(\cdot).
\end{equation}
The inequality (\ref{eqr31}), plus H\"older's inequality, gives that
for $-1<p_1<p_2<0$,
\begin{equation}\label{eqr32a}
\int_{\Omega}\big(g_{(\Omega,\sigma)}(\cdot,a)\big)^{p_2}dA_\mathsf{\sigma}(\cdot)\le
c(p_1,p_2,\kappa_\sigma(D))\int_{\Omega}\big(g_{(\Omega,\sigma)}(\cdot,a)\big)^{p_1}dA_\mathsf{\sigma}(\cdot).
\end{equation}

\item{\rm(ii)} When $-1<p<0$,

\begin{eqnarray}\label{eqr31b}
A_\mathsf{\sigma}(D)&\le&-\big(4\pi\kappa_\sigma(D)\big)^{-1}\lim_{t\to
0^+}\frac{d}{dt}\Big(\int_{\Omega_t}dA_\mathsf{\sigma}(\cdot)\Big)\nonumber\\
&=&\lim_{p\to-1^+}\frac{\big(4\pi\kappa_\sigma(D)\big)^{p}}{\Gamma(1+p)}
\int_D\big(g_{(\Omega,\sigma)}(\cdot,a)\big)^pdA_\mathsf{\sigma}(\cdot)\\
&=&\big(4\pi\kappa_\sigma(D)\big)^{-1} \int_{\partial
\Omega}\Big({\frac{\partial g_{(\Omega,\sigma)}(\cdot,a)}{\partial
n_\mathsf{\sigma}}}\Big)^{-1}dL_\mathsf{\sigma}(\cdot)\nonumber.
\end{eqnarray}
In the above and below,
$$
\frac{\partial g_{(\Omega,\sigma)}(\cdot,a)}{\partial
n_\mathsf{\sigma}}=e^{-u(\cdot)}\frac{\partial
g_{\Omega}(\cdot,a)}{\partial n}
$$
is the inner normal derivative of $g_{(\Omega,\sigma)}(\cdot,a)$
with respect to the metric $\mathsf{\sigma}=e^{2u}ds^2$.

\item{\rm(iii)} From (\ref{eq10a}) and (\ref{eq4111b}) we see the
counterpart of (\ref{eq+b}) below:
\begin{equation}\label{eq+a}
\lim_{t\to\infty}\frac{A_\sigma(\{z\in\Omega:
g_{(\Omega,\sigma)}(z,a)>t\}\big)}{e^{-4\pi\kappa_\sigma(D)
t}}=\left\{\begin{array} {r@{\;,\quad}l}
0 & \kappa_\sigma(D)<1,\\
\pi\big(R_{(\Omega,\sigma)}(a)\big)^2 & \kappa_\sigma(D)=1.
\end{array}
\right.
\end{equation}
\end{remark}

Needless to say, the newfound optimal isoperimetric-type inequality
in the following corollary is of particular interest.

\begin{corollary}\label{c45} Let $\mathsf{\sigma}=e^{2u}ds^2$ be a conformal metric on a domain $\Sigma\subseteq\mathbb R^2$
for which $u$ is continuous on $\Sigma\cup\partial\Sigma$ but also
is of class $C^1$ and piecewise continuous second-order partial
derivatives on $\Sigma$. Suppose $D=(\Omega,\mathsf{\sigma})$ is a
simply-connected domain on $(\Sigma,\mathsf{\sigma})$ with $\partial
D=(\partial\Omega,\mathsf{\sigma})$ being a rectifiable simple
curve. If
$$
p\in (0,\infty),\quad (a,\sigma)\in D,\quad
\kappa_\sigma(D)=1-(2\pi)^{-1}\int_D
K_\mathsf{\sigma}^+dA_\mathsf{\sigma}>0,
$$
and
$$
\mathcal{G}\big(p,a,\kappa_\sigma(D)\big)=\frac{\big(4\pi\kappa_\sigma(D)\big)^p}{\Gamma(p+1)}
\int_{D}\big(g_{D}(\cdot,a)\big)^p dA_\mathsf{\sigma}(\cdot),
$$
then
\begin{equation}\label{eq7a}
4\pi\kappa_\sigma(D)\mathcal{G}\big(p,a,\kappa_\sigma(D)\big)\le
\big(L_\mathsf{\sigma}(\partial D)\big)^2\le\int_{\partial
D}\Big({\frac{\partial g_{D}(\cdot,a)}{\partial
n_\mathsf{\sigma}}}\Big)^{-1}dL_\mathsf{\sigma}(\cdot),
\end{equation}
where the left- (right-) hand inequality in (\ref{eq7a}) happens
when and only when $K_\mathsf{\sigma}$ vanishes on $D$ and $\partial
D$ is a geodesic circle centered at $(a,\mathsf{\sigma})\in D$
(there is a positive number $\lambda$ such that
$u=\ln\big(\lambda\partial g_{\Omega}(\cdot,a)/\partial n\big)$).
\end{corollary}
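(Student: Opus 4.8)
The plan is to transcribe the proof of Corollary \ref{c5} into the intrinsic language of $(\Sigma,\mathsf{\sigma})$. First I would note, exactly as in the proof of Theorem \ref{t4b}, that $\mathcal{G}\big(p,a,\kappa_\sigma(D)\big)$ coincides with $\mathcal{F}\big(p,a,\kappa(\Omega)\big)$, so that the choice $0=p_1<p_2=p$ in Theorem \ref{t4b}(i) gives
$$
\mathcal{G}\big(p,a,\kappa_\sigma(D)\big)\le\mathcal{G}\big(0,a,\kappa_\sigma(D)\big)=A_\mathsf{\sigma}(D)
$$
for every $(a,\sigma)\in D$. Multiplying by $4\pi\kappa_\sigma(D)$ and invoking Huber's geometric isoperimetric inequality (Theorem \ref{t4a}) applied to $D$ with boundary $\partial D$, which reads
$$
4\pi\kappa_\sigma(D)A_\mathsf{\sigma}(D)=4\pi A_\mathsf{\sigma}(D)\Big(1-(2\pi)^{-1}\int_D K_\mathsf{\sigma}^+dA_\mathsf{\sigma}\Big)\le\big(L_\mathsf{\sigma}(\partial D)\big)^2,
$$
and chaining the two estimates yields the left-hand inequality in (\ref{eq7a}).

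For the right-hand inequality I would apply the Cauchy--Schwarz inequality on $\partial D$ with respect to the arc-length element $dL_\mathsf{\sigma}$. The preliminary observation is that, since $\partial g_{D}(\cdot,a)/\partial n_\mathsf{\sigma}=e^{-u}\,\partial g_\Omega(\cdot,a)/\partial n>0$ while $dL_\mathsf{\sigma}=e^{u}\,dL$, one has
$$
\int_{\partial D}\Big(\frac{\partial g_{D}(\cdot,a)}{\partial n_\mathsf{\sigma}}\Big)dL_\mathsf{\sigma}=\int_{\partial\Omega}\Big(\frac{\partial g_\Omega(\cdot,a)}{\partial n}\Big)dL=1 .
$$
Writing $L_\mathsf{\sigma}(\partial D)=\int_{\partial D}\big(\partial g_{D}(\cdot,a)/\partial n_\mathsf{\sigma}\big)^{-1/2}\big(\partial g_{D}(\cdot,a)/\partial n_\mathsf{\sigma}\big)^{1/2}dL_\mathsf{\sigma}$ and using Cauchy--Schwarz with the identity above gives
$$
\big(L_\mathsf{\sigma}(\partial D)\big)^2\le\int_{\partial D}\Big(\frac{\partial g_{D}(\cdot,a)}{\partial n_\mathsf{\sigma}}\Big)^{-1}dL_\mathsf{\sigma}\cdot\int_{\partial D}\Big(\frac{\partial g_{D}(\cdot,a)}{\partial n_\mathsf{\sigma}}\Big)dL_\mathsf{\sigma}=\int_{\partial D}\Big(\frac{\partial g_{D}(\cdot,a)}{\partial n_\mathsf{\sigma}}\Big)^{-1}dL_\mathsf{\sigma}.
$$

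Finally I would read off the equality cases. Equality in the left-hand bound forces equality in the $0=p_1<p_2=p$ instance of (\ref{eq4111a}), which by Theorem \ref{t4b}(i) holds exactly when $K_\mathsf{\sigma}\equiv 0$ on $D$ and $\partial D$ is a geodesic circle centered at $(a,\mathsf{\sigma})$; conversely that configuration also turns Huber's inequality into an equality, so the two conditions coincide. Equality in the Cauchy--Schwarz step holds precisely when $\big(\partial g_{D}(\cdot,a)/\partial n_\mathsf{\sigma}\big)^{-1}$ is proportional along $\partial D$ to $\partial g_{D}(\cdot,a)/\partial n_\mathsf{\sigma}$, i.e. when $e^{-u}\,\partial g_\Omega(\cdot,a)/\partial n$ is constant on $\partial\Omega$, equivalently $u=\ln\big(\lambda\,\partial g_\Omega(\cdot,a)/\partial n\big)$ for some $\lambda>0$. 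There is no genuine obstacle; the only point requiring care is the normal-derivative bookkeeping — the factor $e^{u}$ versus $e^{-u}$ — that makes $\int_{\partial D}\big(\partial g_{D}/\partial n_\mathsf{\sigma}\big)dL_\mathsf{\sigma}=1$ come out right, together with checking that the two descriptions of the left-hand equality case are indeed the same.
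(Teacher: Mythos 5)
Your proof is correct and is essentially the paper's own argument: the paper disposes of this corollary by saying it follows immediately from Corollary \ref{c5}, whose proof is exactly your chain -- the $0=p_1<p_2=p$ case of the monotonicity, Huber's isoperimetric inequality, and Cauchy--Schwarz against the unit flux $\int_{\partial D}\big(\partial g_{D}(\cdot,a)/\partial n_\mathsf{\sigma}\big)dL_\mathsf{\sigma}=1$ -- written in conformal rather than intrinsic notation. Your explicit bookkeeping of $\partial g_{D}/\partial n_\mathsf{\sigma}=e^{-u}\,\partial g_{\Omega}/\partial n$ and the translation of the left-hand equality case through Theorem \ref{t4b}(i) (noting it subsumes equality in Theorem \ref{t4a}) merely spells out the reduction the paper leaves implicit.
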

\begin{proof} This follows immediately Corollary \ref{c5}.
\end{proof}

\section{Application}

In this final section we are concerned about how to apply the
previous ideas, methods and techniques to settling some problems on
complete noncompact surfaces without boundary.

In accordance with the definition adapted by \cite{Li} and \cite{LiTa}, we say that $(\mathbb M^2,\mathsf{g})$ is a complete
noncompact boundary-free surface provided that $(\mathbb
M^2,\mathsf{g})$ is a two-dimensional complete noncompact manifold
$\mathbb M^2$ without boundary, equipped with a Riemannian metric
$\mathsf g$. On such a surface, we always employ
$$
d_\mathsf{g}(\cdot,\cdot);\ \ K_{\mathsf g}(\cdot);\ \
K^{\pm}_{\mathsf g}(\cdot)=\max\{\pm K_\mathsf{g},0\};\ \
\chi(\cdot);\ \ dA_{\mathsf g}(\cdot);\quad dL_{\mathsf g}(\cdot);\
\ \Delta_{\mathsf g}(\cdot);\ \ \nabla_{\mathsf g}(\cdot),
$$
to denote the distance function; the Gauss curvature; the positive
or negative part of the Gauss curvature; the Euler characteristic;
the area element; the length element; the Laplacian operator; the
gradient, respectively -- see also Shiohama-Shioya-Tanaka's
monograph \cite{ShShTa} for some related materials. The following
celebrated Gauss-Bonnet type results (i) and (ii) are due to
Cohn-Vossen \cite{CoVo} and Huber \cite{Hu1}, and Hartman \cite{Ha}
and Shiohama \cite{Sh}, in the above-mentioned order.

\begin{theorem}\label{t5a} Let $(\mathbb M^2,\mathsf{g})$ be a
complete noncompact boundary-free surface with $K^-_{\mathsf g}$
being integrable with respect to $dA_{\mathsf g}$. Then

\item{\rm(i)} $\mathbb M^2$ is conformally equivalent to a compact Riemann
surface minus finitely many points. Moreover
$$
\int_{\mathbb M^2}K_{\mathsf g}dA_{\mathsf g}\le 2\pi\chi(\mathbb
M^2)\quad\hbox{and}\quad\int_{\mathbb M^2}|K_{\mathsf g}|dA_{\mathsf
g}<\infty.
$$
Especially, $\mathbb M^2$ is conformally equivalent to $\mathbb R^2$
whenever $\mathbb M^2$ is simply-connected.

\item{\rm(ii)} For any geodesic ball $B(a,r)=\{z\in\mathbb M^2: d_\mathsf{g}(z,a)<r\}$ centered at
$a\in\mathbb M^2$ with radius $r>0$ and its boundary $\partial
B(a,r)=\{z\in\mathbb M^2: d_\mathsf{g}(z,a)=r\}$ on $(\mathbb
M^2,\mathsf{g})$,
$$
\chi(\mathbb M^2)-(2\pi)^{-1}\int_{\mathbb M^2}K_{\mathsf
g}dA_{\mathsf g}=\lim_{r\to\infty}\frac{\Big(L_{\mathsf{g}}(\partial
B(a,r)\big)\Big)^2}{4\pi A_{\mathsf{g}}\big(B(a,r)\big)}.
$$
\end{theorem}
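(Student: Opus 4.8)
The plan is to derive both parts from the Gauss--Bonnet theorem applied to the exhausting family of geodesic balls $B(a,r)$ together with the first variation of arc length; the hypothesis that $K^-_{\mathsf g}$ is $dA_{\mathsf g}$-integrable is precisely what controls the limits. Writing $\ell(r)=L_{\mathsf g}\big(\partial B(a,r)\big)$, the first variation formula gives $\ell'(r)=\int_{\partial B(a,r)}k_{\mathsf g}\,dL_{\mathsf g}$, the integral of the geodesic curvature of the distance circle, while Gauss--Bonnet reads
\[
2\pi\chi\big(B(a,r)\big)=\int_{B(a,r)}K_{\mathsf g}\,dA_{\mathsf g}+\int_{\partial B(a,r)}k_{\mathsf g}\,dL_{\mathsf g},
\]
so that $\ell'(r)=2\pi\chi\big(B(a,r)\big)-\int_{B(a,r)}K_{\mathsf g}\,dA_{\mathsf g}$. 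Since $\int_{B(a,r)}K_{\mathsf g}\,dA_{\mathsf g}\ge-\int_{\mathbb M^2}K^-_{\mathsf g}\,dA_{\mathsf g}>-\infty$ is bounded below and $\ell(r)\ge0$, the quantity $\chi\big(B(a,r)\big)$ cannot tend to $-\infty$; hence $\mathbb M^2$ has finite topological type, $B(a,r)$ is diffeomorphic to $\mathbb M^2$ for all large $r$, and $\chi\big(B(a,r)\big)=\chi(\mathbb M^2)$ eventually.

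For part (i) I would follow Huber's end-by-end analysis. Finite topology leaves finitely many ends, each carrying a complete conformal metric $e^{2u}|dz|^2$ on a punctured disk $\{0<|z|<1\}$ with $\Delta u=-K_{\mathsf g}e^{2u}$ and $\int|K_{\mathsf g}|e^{2u}\,dA<\infty$; a Newtonian-potential decomposition of $u$ against this finite measure shows $u(z)=\beta\log\tfrac1{|z|}+h(z)$ with $h$ bounded, which identifies the end with a punctured neighborhood in a closed Riemann surface. Gluing the ends to the relatively compact interior and invoking the uniformization theorem gives the first assertion. The Cohn--Vossen inequality then follows by letting $r\to\infty$ in $\ell'(r)=2\pi\chi(\mathbb M^2)-\int_{B(a,r)}K_{\mathsf g}\,dA_{\mathsf g}$: the left side has a nonnegative limit because $\ell$ is eventually nondecreasing on a surface of finite total curvature, so $\int_{\mathbb M^2}K_{\mathsf g}\,dA_{\mathsf g}\le2\pi\chi(\mathbb M^2)$, and then $\int_{\mathbb M^2}K^+_{\mathsf g}\,dA_{\mathsf g}\le2\pi\chi(\mathbb M^2)+\int_{\mathbb M^2}K^-_{\mathsf g}\,dA_{\mathsf g}<\infty$ yields $\int_{\mathbb M^2}|K_{\mathsf g}|\,dA_{\mathsf g}<\infty$. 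Finally, a simply-connected closed Riemann surface minus finitely many points must be $\mathbb R^2$.

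For part (ii) I would make the asymptotics quantitative. From the displayed identity,
\[
\ell'(r)=2\pi\chi(\mathbb M^2)-\int_{B(a,r)}K_{\mathsf g}\,dA_{\mathsf g}\ \longrightarrow\ 2\pi\Big(\chi(\mathbb M^2)-(2\pi)^{-1}\int_{\mathbb M^2}K_{\mathsf g}\,dA_{\mathsf g}\Big)=:2\pi c\ge0,
\]
so $\ell(r)=2\pi c\,r+o(r)$, and since $A_{\mathsf g}\big(B(a,r)\big)=\int_0^r\ell(\rho)\,d\rho=\pi c\,r^2+o(r^2)$ we obtain
\[
\frac{\big(L_{\mathsf g}(\partial B(a,r))\big)^2}{4\pi A_{\mathsf g}(B(a,r))}=\frac{(2\pi c\,r)^2+o(r^2)}{4\pi\cdot\pi c\,r^2+o(r^2)}\ \longrightarrow\ c=\chi(\mathbb M^2)-(2\pi)^{-1}\int_{\mathbb M^2}K_{\mathsf g}\,dA_{\mathsf g},
\]
the degenerate case $c=0$ being handled by the one-sided estimates $\ell(r)=o(r)$ and $A_{\mathsf g}(B(a,r))=o(r^2)$, which still follow from $\ell'(r)\to0$.

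The main obstacle is the regularity of the distance function $d_{\mathsf g}(\cdot,a)$: the circle $\partial B(a,r)$ is a smooth curve only away from the cut locus, so both $\ell'(r)=\int_{\partial B(a,r)}k_{\mathsf g}\,dL_{\mathsf g}$ and the clean form of Gauss--Bonnet must be justified across the critical values of $r$, where corners and a cut-locus contribution appear. This is exactly where one invokes the structure theory of cut loci on surfaces of finite total curvature (Fiala, Hartman, Shiohama): $\ell$ is Lipschitz, $r\mapsto\int_{\partial B(a,r)}k_{\mathsf g}\,dL_{\mathsf g}$ is read as a one-sided derivative, and the exterior-angle and turning corrections are summable precisely because the total curvature, hence the total turning of geodesic circles, is finite. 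The potential-theoretic step in part (i) — extracting the logarithmic profile of $u$ at each puncture from mere $L^1$-control of $K_{\mathsf g}e^{2u}$ — is the other delicate point.
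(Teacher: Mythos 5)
A point of comparison first: the paper itself gives no proof of this statement --- Theorem \ref{t5a} is quoted as a known Gauss--Bonnet-type result, with (i) attributed to Cohn--Vossen and Huber and (ii) to Hartman and Shiohama --- so your outline is really being measured against those classical arguments. As an outline it follows the right path (exhaustion by geodesic balls, Gauss--Bonnet with boundary turning, Huber's analysis of the ends, Hartman-type asymptotics for $\ell(r)=L_{\mathsf g}(\partial B(a,r))$ and $A_{\mathsf g}(B(a,r))$), and you correctly locate the cut-locus regularity as a serious issue. But two of your steps are genuine gaps, not deferrable technicalities.

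First, in (i) you assume each end is conformally a punctured disk $\{0<|z|<1\}$ and that $\int|K_{\mathsf g}|e^{2u}\,dA<\infty$ there. Finite topological type only yields that each end is conformally a punctured disk or a genuine annulus $\{\rho<|z|<1\}$; excluding the annular (hyperbolic) alternative --- the parabolicity of the ends --- is precisely the content of Huber's theorem and is where completeness together with $\int_{\mathbb M^2}K^-_{\mathsf g}\,dA_{\mathsf g}<\infty$ must enter; without it ``compact Riemann surface minus finitely many points'' does not follow. Moreover, at that stage only $K^-_{\mathsf g}$ is known to be integrable (finiteness of $\int_{\mathbb M^2}|K_{\mathsf g}|\,dA_{\mathsf g}$ is deduced afterwards from Cohn--Vossen), so the potential-theoretic decomposition has to be run against $K^-_{\mathsf g}e^{2u}$ via a superharmonic comparison, not against $K_{\mathsf g}e^{2u}$; as ordered, your argument is circular. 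Second, in (ii) the degenerate case is a non sequitur: $\ell(r)=o(r)$ and $A_{\mathsf g}(B(a,r))=o(r^2)$ leave the ratio $\ell(r)^2/\big(4\pi A_{\mathsf g}(B(a,r))\big)$ indeterminate. What rescues it is the derivative bound itself: for large $r$ one has (a.e., one-sided) $\ell'\le\epsilon$, hence $\ell(s)\ge\ell(r)/2$ on an interval of length $\ell(r)/(2\epsilon)$ to the left of $r$, so $A_{\mathsf g}(B(a,r))\ge\ell(r)^2/(4\epsilon)$ and the ratio is $O(\epsilon)$. More importantly, across the cut locus one has in general only the one-sided inequality $\ell'(r)\le 2\pi\chi\big(B(a,r)\big)-\int_{B(a,r)}K_{\mathsf g}\,dA_{\mathsf g}$ --- the direction that suffices for the Cohn--Vossen inequality in (i) --- whereas (ii) needs the matching lower asymptotics $\liminf_{r\to\infty}\ell(r)/r\ge 2\pi c$. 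That reverse bound is the substance of Hartman's and Shiohama's theorems, so relegating it to a regularity remark means your proof of (ii) is, in effect, the same appeal to the literature the paper makes rather than an independent argument.
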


Given a bounded and open subset $O$ of $\mathbb M^2$ with boundary
$\partial O$, we denote by $g_{(O,\mathsf{g})}(\cdot,a)$ the Green
function of $O$ with pole at $a\in O$ for $\Delta_{\mathsf{g}}$
provided this function is decided by the Dirichlet boundary problem:
\[
\left\{\begin{array} {r@{\quad,\quad}l}
\Delta_{\mathsf{g}} g_{(O,\mathsf{g})}(z,a)=-\delta_a(z) & z\in O,\\
g_{(O,\mathsf{g})}(z,a)=0 & z\in \partial O.
\end{array}
\right.
\]
The first equation is clearly understood under the distribution with
respect to the area element $dA_{\mathsf{g}}$. Moreover, the
definition of this Green's function can be extended to the surface
$(\mathbb M^2,\mathsf{g})$ via letting $g_{(O,\mathsf{g})}(z,a)=0$
for $z\in \mathbb M^2\setminus O$. From \cite[Theorem 4.13]{Au} it
turns out that there exist a small number $\epsilon>0$ and a
function $H_{(O,\mathsf{g})}(\cdot,\cdot)$ (which is continuous
symmetric on $O\times O$ and $C^\infty$-smooth on $O\times
O\setminus \{(a,a)\}$) such that $d_\mathsf{g}(z,a)<\epsilon$
implies
$$
g_{(O,\mathsf{g})}(z,a)=-(2\pi)^{-1}\big(\ln
d_\mathsf{g}(z,a)+H_{(O,\mathsf{g})}(z,a)\big).
$$
Consequently, a combined use of the Green function and the distance
function induces the Robin function/mass $H_{(O,\mathsf{g})}(a,a)$
and the conformal radius $R_{(O,\mathsf{g})}(a)$ at $a\in O$ under
the metric $\mathsf{g}$:
$$
H_{(O,\mathsf{g})}(a,a)=-2\pi\lim_{z\to a}\Big((2\pi)^{-1}\ln
{d_\mathsf{g}(z,a)}+g_{(O,\mathsf{g})}(z,a)\Big)
$$
and
$$
R_{(O,\mathsf{g})}(a)=\exp\big(-H_{(O,\mathsf{g})}(a,a)\big).
$$

As an immediate application of Theorems \ref{t3}-\ref{t4}, we have
the following assertion whose (i) has slightly stronger hypothesis
and conclusion than Li-Tam's ones in \cite[Theorem 5.1]{LiTa}.

\begin{theorem}\label{t51} Let $(\mathbb M^2,\mathsf{g})$ be a
simply-connected complete noncompact boundary-free surface with
$$
\int_{\mathbb M^2}K^-_gdA_\mathsf{g}<\infty\quad\hbox{and}\quad
\int_{\mathbb M^2}K^+_gddA_\mathsf{g}<2\pi.
$$
Then

\item{\rm(i)} For $(\mathbb M^2,\mathsf{g})$, the best isoperimetric
constant:
$$
\tau_g(\mathbb M^2)=\inf\left\{\frac{\Big(L_{\mathsf{g}}(\partial
O\big)\Big)^2}{4\pi A_{\mathsf{g}}\big(O\big)}:\quad O\in
BRD(\mathbb M^2)\right\}
$$
satisfies
\begin{equation}\label{eq5a}
1-(2\pi)^{-1}\int_{\mathbb M^2}K^+_{\mathsf
g}dA_\mathsf{g}\le\tau_g(\mathbb M^2)\le 1-(2\pi)^{-1}\int_{\mathbb
M^2}K_\mathsf{g}dA_\mathsf{g},
\end{equation}
where the infimum is taken over all relatively compact domains
$O\subseteq\mathbb M^2$ (written as $O\in RCD(\mathbb M^2)$).
Obviously, the equalities in (\ref{eq5a}) occur when $K_\mathsf{g}$
is nonnegative on $\mathbb M^2$.

\item{\rm(ii)} For $a\in O$, $O\in BRD(\mathbb M^2)$ with $C^\infty$ boundary $\partial O$, ${\partial g_{(O,\mathsf{g})}(\cdot,a)}/{\partial
n_\mathsf{g}}$-the inner normal derivative of
$g_{(O,\mathsf{g})}(\cdot,a)$ under $\mathsf{g}$, and $0\le
p<\infty$, the $L^p$-integral of the Green's function
$g_{(O,\mathsf{g})}(\cdot,a)$:
$$
\mathcal{H}\big(p,a,O,\tau_\mathsf{g}(\mathbb
M^2)\big)=\frac{\big(4\pi\tau_{\mathsf g}(\mathbb
M^2)\big)^{p}}{\Gamma(1+p)}\int_O\big(g_{(O,\mathsf{g})}(\cdot,a)\big)^{p}dA_{\mathsf
g}(\cdot)
$$
enjoys
\begin{eqnarray}\label{eq5b}
0\le
p_1<p_2<\infty&\Rightarrow&\mathcal{H}\big(p_2,a,O,\tau_\mathsf{g}(\mathbb
M^2)\big)\nonumber\\
&\le&\mathcal{H}\big(p_1,a,O,\tau_\mathsf{g}(\mathbb
M^2)\big)\nonumber\\
&\le&\big(4\pi\tau_\mathsf{g}(\mathbb
M^2)\big)^{-1}\big(L_\mathsf{g}(\partial O)\big)^2\\
&\le&\big(4\pi\tau_\mathsf{g}(\mathbb M^2)\big)^{-1}\int_{\partial
O}\Big(\frac{g_{(O,\mathsf{g})}(z,a)}{\partial
n_\mathsf{g}}\Big)^{-1}dL_\mathsf{g}(z),\nonumber
\end{eqnarray}
where the second/third/fourth equality in (\ref{eq5b}) holds when
$K_{\mathsf g}$ vanishes on $O$ but also $O$ is a geodesic ball
$B(a,r)$. Moreover,

\begin{equation}\label{eq5bb}
\lim_{p\to\infty}\mathcal{H}\big(p,a,O,\tau_\mathsf{g}(\mathbb
M^2)\big)=\lim_{t\to\infty}e^{4\pi t\tau_\mathsf{g}(\mathbb
M^2)}A_\mathsf{g}\big(\{z\in O:\ g_{(O,\mathsf{g})}(z,a)>t\}\big).
\end{equation}
In particular, if $K_\mathsf{g}\ge 0$ then
\begin{equation}\label{eq+bbb}
\lim_{p\to\infty}\mathcal{H}\big(p,a,O,\tau_\mathsf{g}(\mathbb
M^2)\big)=\left\{\begin{array} {r@{\;,\quad}l}
0 & \tau_\mathsf{g}(\mathbb M^2)<1,\\
\pi\big(R_{(O,\mathsf{g})}(a)\big)^2 & \tau_\mathsf{g}(\mathbb
M^2)=1.
\end{array}
\right.
\end{equation}

\end{theorem}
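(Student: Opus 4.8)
The plan is to reduce Theorem \ref{t51} to the Euclidean results of Section 3 by using the conformal uniformization provided by Cohn-Vossen--Huber's Theorem \ref{t5a}(i): since $(\mathbb M^2,\mathsf g)$ is simply-connected with $K^-_{\mathsf g}$ integrable, it is conformally equivalent to $\mathbb R^2$, so there is a global isothermal coordinate $z$ on $\mathbb M^2\cong\mathbb R^2$ in which $\mathsf g=e^{2u}|dz|^2$ for a function $u$ of the regularity demanded in Theorems \ref{t3}--\ref{t4}. Under this identification $K_{\mathsf g}=-e^{-2u}\Delta u$, $dA_{\mathsf g}=e^{2u}\,dA$, $dL_{\mathsf g}=e^u\,dL$, the Laplace-Beltrami operator is $\Delta_{\mathsf g}=e^{-2u}\Delta$, and the Green function $g_{(O,\mathsf g)}(\cdot,a)$ coincides with the Euclidean Green function $g_\Omega(\cdot,a)$ of the corresponding planar domain $\Omega$ (exactly the point already made in Section 4 for $(\Sigma,\sigma)$). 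Moreover the total-curvature hypothesis $\int_{\mathbb M^2}K^+_{\mathsf g}\,dA_{\mathsf g}<2\pi$ says precisely that $\kappa(\mathbb M^2):=1-(2\pi)^{-1}\int_{\mathbb M^2}K^+_{\mathsf g}\,dA_{\mathsf g}>0$, and for any relatively compact $O\cong\Omega$ one has $\kappa(\Omega)\ge\kappa(\mathbb M^2)$.

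For part (i), the upper bound $\tau_{\mathsf g}(\mathbb M^2)\le 1-(2\pi)^{-1}\int_{\mathbb M^2}K_{\mathsf g}\,dA_{\mathsf g}$ comes from Theorem \ref{t5a}(ii): feeding the geodesic balls $B(a,r)$ into the infimum defining $\tau_{\mathsf g}$ and letting $r\to\infty$ gives exactly this value as a limit superior of admissible ratios. The lower bound follows from Huber's surface isoperimetric inequality, Theorem \ref{t4a}: for any $O\in RCD(\mathbb M^2)$ with rectifiable boundary, $\big(L_{\mathsf g}(\partial O)\big)^2\ge 4\pi A_{\mathsf g}(O)\big(1-(2\pi)^{-1}\int_O K^+_{\mathsf g}\,dA_{\mathsf g}\big)\ge 4\pi A_{\mathsf g}(O)\,\kappa(\mathbb M^2)$, so every admissible ratio is $\ge\kappa(\mathbb M^2)$. (One should note that the infimum over general relatively compact domains is unchanged by restricting to those with rectifiable simple-curve boundary, by an approximation argument, so Theorem \ref{t4a} applies to the relevant competitors.) When $K_{\mathsf g}\ge 0$ the two bounds coincide, giving the stated equality.

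For part (ii), fix $O\in RCD(\mathbb M^2)$ with $C^\infty$ boundary and $a\in O$, and let $\Omega\cong O$ be the planar domain in the isothermal coordinate. Writing $\tau:=\tau_{\mathsf g}(\mathbb M^2)$, I observe $\kappa(\Omega)\ge\tau$, hence $\mathcal H(p,a,O,\tau)=\frac{(4\pi\tau)^p}{\Gamma(1+p)}\int_O(g_{(O,\mathsf g)})^p\,dA_{\mathsf g}\le\frac{(4\pi\kappa(\Omega))^p}{\Gamma(1+p)}\int_\Omega(g_\Omega)^p e^{2u}\,dA=\mathcal F(p,a,\kappa(\Omega))$; more usefully, the constant $c=4\pi\tau$ is $\le 4\pi\kappa(\Omega)$, so running the argument of Theorem \ref{t4}(i) with $c=4\pi\tau$ in Theorem \ref{l6} in place of $c=4\pi\kappa(\Omega)$ still works, because the key differential inequality (the surface-isoperimetric estimate (\ref{eq11})) gives $4\pi\tau\,X(t)\le 4\pi\kappa(\Omega)X(t)\le X_0(t)$, which is exactly what is needed to verify $d(e^{4\pi\tau t}X(t))/dt\le 0$. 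Thus Theorem \ref{l6}(i) applied with $c=4\pi\tau$ yields the monotonicity $\mathcal H(p_2,\cdot)\le\mathcal H(p_1,\cdot)$ for $0\le p_1<p_2<\infty$; the $\{0,p\}$ case gives $\mathcal H(p,\cdot)\le\mathcal H(0,\cdot)=A_{\mathsf g}(O)$, and then Huber's inequality (Theorem \ref{t4a}) plus the Cauchy--Schwarz step already used in Corollary \ref{c45} gives $4\pi\tau A_{\mathsf g}(O)\le(L_{\mathsf g}(\partial O))^2\le\int_{\partial O}(\partial g_{(O,\mathsf g)}/\partial n_{\mathsf g})^{-1}dL_{\mathsf g}$, which completes the chain (\ref{eq5b}). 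The limit formula (\ref{eq5bb}) follows from Theorem \ref{l6}(ii), since $X(0)=A_{\mathsf g}(O)<\infty$ and $\lim_{p\to\infty}\mathcal H(p,a,O,\tau)=\lim_{t\to\infty}e^{4\pi\tau t}X(t)$ by the argument of Theorem \ref{t4}(ii). Finally, when $K_{\mathsf g}\ge 0$ we have $\tau=1-(2\pi)^{-1}\int_{\mathbb M^2}K_{\mathsf g}\,dA_{\mathsf g}$ by part (i); if $\tau<1$ the exponential weight $e^{4\pi\tau t}$ against $X(t)\le C e^{-4\pi\kappa(\Omega)t}$ (with $\kappa(\Omega)$ possibly larger than $\tau$, but in fact here $\kappa(\Omega)\le 1$ always and the decay of $X$ for a fixed $O$ is governed by $\kappa(\Omega)$, so I instead invoke directly that $\lim_{t\to\infty}e^{4\pi\tau t}X(t)=0$ when $\tau<\kappa(\Omega)$, and $\tau\le\kappa(\mathbb M^2)\le\kappa(\Omega)$ with strictness coming from $\tau<1$ combined with $O\subsetneq\mathbb M^2$ carrying strictly less curvature — this is the delicate point), while if $\tau=1$ then $K_{\mathsf g}\equiv 0$, $u$ is harmonic, $\kappa(\Omega)=1$, and (\ref{eq+}) together with the Robin-function computation in the proof of Theorem \ref{t4b} (the comparison of $\ln d_{\mathsf g}(z,a)$ with $\ln|z-a|+u(a)$ via \cite[Lemma 1]{Ste}) identifies the limit as $\pi(R_{(O,\mathsf g)}(a))^2$, giving (\ref{eq+bbb}).

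The main obstacle I anticipate is the case analysis in (\ref{eq+bbb}): when $\tau_{\mathsf g}(\mathbb M^2)<1$ one must show the limit is $0$, and the natural estimate $X(t)\le C e^{-4\pi\kappa(\Omega)t}$ only kills the weight $e^{4\pi\tau t}$ if $\tau<\kappa(\Omega)$; justifying the strict inequality $\tau_{\mathsf g}(\mathbb M^2)<\kappa(\Omega)$ for a fixed relatively compact $O$ (or, alternatively, arguing that $\mathcal H(p,\cdot)\to 0$ directly from monotonicity plus the vanishing of the limit for the ambient competitors) is the step that requires care; everything else is a transcription of Sections 2--4 with the constant $4\pi\kappa(\Omega)$ replaced by the a priori smaller $4\pi\tau_{\mathsf g}(\mathbb M^2)$, which is legitimate precisely because the isoperimetric deficit on each $O$ is controlled by the global one.
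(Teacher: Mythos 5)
Your overall architecture is the same as the paper's (uniformize to $(\mathbb R^2,e^{2u}ds^2)$ via Theorem \ref{t5a}(i); get the lower bound in (\ref{eq5a}) from Huber's Theorem \ref{t4a} and the upper bound from Theorem \ref{t5a}(ii) applied to large geodesic balls; then feed the level sets of $g_{(O,\mathsf g)}(\cdot,a)$ into Theorem \ref{l6} with $c=4\pi\tau_{\mathsf g}(\mathbb M^2)$). But there is a genuine gap in how you verify the hypothesis of Theorem \ref{l6} for part (ii): you base it on the claim $\tau_{\mathsf g}(\mathbb M^2)\le\kappa(\Omega)$, writing $4\pi\tau X(t)\le 4\pi\kappa(\Omega)X(t)\le X_0(t)$ via (\ref{eq11}). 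That comparison is never proved and does not follow from anything available; indeed part (i) only yields the opposite-direction facts $\tau_{\mathsf g}(\mathbb M^2)\ge\kappa(\mathbb M^2)$ and $\kappa(\Omega)\ge\kappa(\mathbb M^2)$, and when negative curvature is present the infimum $\tau_{\mathsf g}(\mathbb M^2)$ can strictly exceed the curvature deficit $\kappa(\Omega)$ of a fixed domain containing all the positive curvature (no competitor need realize that deficit once negatively curved regions obstruct the large flat disks required for near-equality in Huber's inequality). The paper sidesteps this entirely and works intrinsically on the surface: each level set $O_t$ is itself a relatively compact competitor, so $4\pi\tau_{\mathsf g}(\mathbb M^2)A_{\mathsf g}(O_t)\le\big(L_{\mathsf g}(\partial O_t)\big)^2$ holds by the very definition of $\tau_{\mathsf g}(\mathbb M^2)$, and combining this with Cauchy--Schwarz, (\ref{eq5im}) and the coarea identity gives $\frac{d}{dt}\big(e^{4\pi\tau_{\mathsf g}(\mathbb M^2)t}A_{\mathsf g}(O_t)\big)\le 0$ directly; Theorem \ref{l6} then yields (\ref{eq5b}) and (\ref{eq5bb}) with no conformal transfer and no comparison between $\tau$ and $\kappa(\Omega)$. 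The same observation settles the point you flag as ``delicate'' in (\ref{eq+bbb}): when $K_{\mathsf g}\ge 0$ part (i) gives the genuine identity $\tau_{\mathsf g}(\mathbb M^2)=\kappa(\mathbb M^2)\le\kappa(\Omega)$, so either $\kappa(\Omega)>\tau_{\mathsf g}(\mathbb M^2)$, in which case the surplus exponential factor forces the limit in (\ref{eq5bb}) to vanish, or $\kappa(\Omega)=\tau_{\mathsf g}(\mathbb M^2)<1$ (resp. $=1$), in which case (\ref{eq+a}) gives $0$ (resp. $\pi\big(R_{(O,\mathsf g)}(a)\big)^2$).

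A smaller point in part (i): the infimum defining $\tau_{\mathsf g}(\mathbb M^2)$ runs over all relatively compact domains, which may be multiply connected, so Theorem \ref{t4a} cannot be applied to them directly, and your parenthetical appeal to an ``approximation argument'' about rectifiable simple-curve boundaries does not address connectivity. The paper's fix is the hole-filling comparison: write $O\subseteq O_0$ with $O_0$ simply connected, note $A_{\mathsf g}(O)\le A_{\mathsf g}(O_0)$ and $L_{\mathsf g}(\partial O_0)\le L_{\mathsf g}(\partial O)$, and apply Theorem \ref{t4a} to $O_0$. With these two repairs your argument matches the paper's proof.
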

\begin{proof} Theorem \ref{t5a} (i) tells us that
$(\mathbb M^2,\mathsf{g})$ is homeomorphic to $(\mathbb
R^2,e^{2u}ds^2)$ where $u$ is of class $C^\infty$ on $\mathbb R^2$.
Thus we may consider $(\mathbb M^2,\mathsf{g})$ to be $(\mathbb
R^2,e^{2u}ds^2)$.

(i) Under this circumstance, any $O\in BRD(\mathbb M^2)$ may be
treated as a domain of the form $O=O_0\setminus(\cup_{j+1}^k D_j)$,
where $O_0$ is a simply-connected domain and contains mutually
disjoint simply-connected domains $O_1,...,O_k$ each of which is
homeomorphic to the unit disk $\mathbb D$. Using Theorem \ref{t4a}
we obtain
\begin{eqnarray*}
A_{\mathsf g}(O)&\le& A_{\mathsf g}(O_0)\\
&\le& \Big(4\pi\big(1-(2\pi)^{-1}\int_{O_0}K^+_\mathsf{g}dA_{\mathsf
g}\Big)^{-1} \big(L_{\mathsf{g}}(\partial O_0)\big)^2\\
&\le&\Big(4\pi\big(1-(2\pi)^{-1}\int_{\mathbb
M^2}K^+_\mathsf{g}dA_{\mathsf g}\Big)^{-1}
\big(L_{\mathsf{g}}(\partial O)\big)^2,
\end{eqnarray*}
whence verifying the left-hand inequality in (\ref{eq5a}). Clearly,
the right-hand inequality of (\ref{eq5a}) follows readily from
$$
\tau_\mathsf{g}(\mathbb M^2)\le\frac{\Big(L_{\mathsf{g}}(\partial
B(a,r)\big)\Big)^2}{4\pi A_{\mathsf{g}}\big(B(a,r)\big)}
$$
and Theorem \ref{t51} (ii) thanks to $\chi(\mathbb M^2)=1$ for the
simply-connected surface $\mathbb M^2$ and $B(a,r)\in BRD(\mathbb
M^2)$.

(ii) At this time, no conformal mapping is taken into account; yet
Theorem \ref{l6} and the key idea proving Theorem \ref{t4} will be
used. To do so, assume $a\in O$ and $O\in BRD(\mathbb M^2)$ with
$C^\infty$ boundary $\partial O$. For $t\ge 0$ set
$$
O_t=\big\{z\in O:\ g_{(O,\mathsf{g})}(z,a)>t\big\}.
$$
Then $g_{(O,\mathsf{g})}(\cdot,a)$ is of class $C^\infty$ on
$O\setminus\{a\}$, and for almost all $t>0$ one has
$$
\partial O_t=\{z\in O:\ g_{(O,\mathsf{g})}(z,a)=t\}.
$$
In the sequel, by $A_{\mathsf{g}}(O_t)$ we mean $\int_{O_t}
dA_\mathsf{g}$ for $t\ge 0$. As a function of $t$,
$A_{\mathsf{g}}(O_t)$ is decreasing and satisfied with the
differential formula
\begin{equation}\label{eq33a}
-\frac{dA_{\mathsf{g}}(O_t)}{dt}=\int_{\partial
O_t}\Big({\frac{\partial g_{(O,\mathsf{g})}(z,a)}{\partial
n_\mathsf{g}}}\Big)^{-1}dL_\mathsf{g}(z)\ge 0.
\end{equation}
Using the Cauchy-Schwarz inequality, (\ref{eq5a}), (\ref{eq33a}) and
the easily-verified formula (through \cite[p. 112, (22)]{Au} for
example)
\begin{equation}\label{eq5im}
\int_{\partial O_t}\left(\frac{{\partial
g_{(O,\mathsf{g})}(z,a)}}{\partial
n_\mathsf{g}}\right)dL_{\mathsf{g}}(z)=1,
\end{equation}
we get that for almost every $t>0$,
\begin{eqnarray*}
\Big(-\frac{dA_\mathsf{g}(O_t)}{dt}\Big)^\frac{1}2&=&\left(\int_{\partial
O_t}\frac{dL_{\mathsf{g}}(z)}{\frac{\partial
g_{(O,\mathsf{g})}(z,a)}{\partial n_\mathsf{g}}}\right)^\frac{1}2
\left(\int_{\partial O_t}{\Big(\frac{\partial
g_{(O,\mathsf{g})}(z,a)}{\partial
n_\mathsf{g}}\Big)}dL_{\mathsf{g}}(z)
\right)^\frac{1}2\\
&\ge&\int_{\partial O_t}dL_\mathsf{g}=L_\mathsf{g}(\partial O_t)\\
&\ge&\big(4\pi\tau_{\mathsf{g}}(\mathbb
M^2)\big)^\frac{1}{2}\big(A_{\mathsf{g}}(O_t)\big)^\frac{1}{2}.
\end{eqnarray*}
These equalities and inequalities yield
$$
\frac{d}{dt}\Big(\exp\big(4\pi\tau_{\mathsf{g}}(\mathbb
M^2)t\big)A_\mathsf{g}(O_t)\Big)=\frac{4\pi\tau_{\mathsf{g}}(\mathbb
M^2)A_\mathsf{g}(O_t)+\frac{dA_\mathsf{g}(O_t)}{dt}}{\exp\big(-4\pi\tau_{\mathsf{g}}(\mathbb
M^2)t\big)}\le 0.
$$
Note that if
$$
X(t)=A_\mathsf{g}(O_t);\quad Y_p(t)=-\int_t^\infty r^p
dA_\mathsf{g}(O_r)\ \ \hbox{for}\ \ p\in [0,\infty);\quad
c=4\pi\tau_\mathsf{g}(\mathbb M^2)
$$
then by the layer cake representation (cf. \cite[p. 26, Theorem
1.13]{LiLo}) and the integration-by-part,
$$
Y_p(t)=\int_{O_t}\big(g_{(O,\mathsf{g})}(z,a)\big)^p
dA_\mathsf{g}(z)=-\int_t^\infty r^p dX(r).
$$
Therefore, using Theorems \ref{l6}(i)-\ref{t4b}(i)-\ref{t51}(i) as
well as (\ref{eq+a}) we derive (\ref{eq5b}) and its equality case
whenever $0\le p_1<p_2<\infty$, as well as (\ref{eq5bb}) and
(\ref{eq+bbb}).
\end{proof}

Evidently, we can obtain the estimates similar to ones in Remark
\ref{r41} -- the details are left to the interested readers.
However, an important observation about the above argument is that
on a complete noncompact boundary-free surface the sharp
isoperimetric inequality must imply the optimal monotone principle
for the $L^p$-integral of Green's function. On the other hand,
according to the well-known Federer-Fleming type theorem for
$(\mathbb M^2,{\mathsf g})$, the isoperimetric inequality
\begin{equation}\label{eq5c-}
4\pi\tau_{\mathsf{g}}(\mathbb M^2)A_\mathsf{g}(O)\le
L_\mathsf{g}(\partial O)\quad\hbox{for}\ \ O\in BRD(\mathbb M^2)\ \
\hbox{with}\ \ C^\infty\ \ \hbox{boundary}\ \ \partial O,
\end{equation}
is equivalent to the Sobolev inequality
\begin{equation}\label{eq5c}
4\pi\tau_{\mathsf{g}}(\mathbb M^2)\int_{\mathbb
M^2}|f|^2dA_\mathsf{g}\le\Big(\int_{\mathbb M^2}|\nabla_\mathsf{g}
f|dA_\mathsf{g}\Big)^2\quad\hbox{for}\ \ f\in C^\infty_0(\mathbb
M^2),
\end{equation}
where $C^\infty_0(\mathbb M^2)$ represents the class of all
$C^\infty$ functions with compact support in $\mathbb M^2$. In
particular, if $K_\mathsf{g}\ge 0$ and (\ref{eq5c-})/(\ref{eq5c})
holds for $\tau_{\mathsf{g}}(\mathbb M^2)=1$ then $(\mathbb
M^2,\mathsf{g})$ is isometric to $(\mathbb R^2,ds^2)$ (cf. \cite[p.
244]{He}). Thus, a very natural question is ``What is an equivalent
analytic representation of the monotonicity for the $L^p$-integral
of Green's function?". Surprisingly but also naturally, the answer
to this question is related to both the Moser-Trudinger inequality
and the Nash-Sobolev inequality on $(\mathbb M^2,\mathsf{g})$.

\begin{theorem}\label{t52} Let $(\mathbb M^2,\mathsf{g})$ be a complete, noncompact, and boundary-free surface. Then the following implications (i)$\Rightarrow$(ii)$\Rightarrow$(iii)
are valid:

\item{\rm(i)} There are two positive constants $c_1$ and $C_1$ such
that Moser-Trudinger's inequality
\begin{equation}\label{eq5d-}
\int_O\exp\big(c_1|f(z)|^2\big)dA_\mathsf{g}(z)\le C_1
A_\mathsf{g}(O)
\end{equation}
holds for all $O\in BRD(\mathbb M^2)$ with $C^\infty$ boundary and
all
$$
f\in C^\infty_0(O)\quad\hbox{with}\quad
\int_O|\nabla_\mathsf{g}f|^2dA_\mathsf{g}\le 1.
$$

\item{\rm(ii)} There is a constant $C_2>0$ such that the $0=p_1<p_2=1$ monotonicity of Green's function
integral
\begin{equation}\label{eq5d}
\int_{O}g_{(O,\mathsf{g})}(z,a)dA_\mathsf{g}(z)\le
C_2A_\mathsf{g}(O),\quad a\in O
\end{equation}
holds for all $O\in BRD(\mathbb M^2)$ with $C^\infty$ boundary.

\item{\rm(iii)} There is a constant $C_3>0$ such that Nash-Sobolev's inequality
\begin{equation}\label{eq5f}
\Big(\int_{\mathbb M^2}|f|^2dA_\mathsf{g}\Big)^2\le
C_3\Big(\int_{\mathbb M^2}|\nabla_\mathsf{g}
f|^2dA_\mathsf{g}\Big)\Big(\int_{\mathbb M^2}|f|dA_\mathsf{g}\Big)^2
\end{equation}
holds for all $f\in C^\infty_0(\mathbb M^2)$.

Moreover, if there are two positive constants $c_0$ and $C_0$ such
that for any $a\in O$ and $O\in BRD(\mathbb M^2)$ with $C^\infty$
boundary one has
\begin{equation}\label{eq5ge}
A_\mathsf{g}\big(B(a,r)\big)\ge c_0r^2\quad\hbox{and}\quad
L_\mathsf{g}\big(\partial B(a,r)\big)\le C_0r\quad\hbox{for}\quad
0<r<\infty,
\end{equation}
then the implication (iii)$\Rightarrow$(ii) is valid too.
\end{theorem}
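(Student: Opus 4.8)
The plan is to establish the cycle of implications by separating the purely ``one-domain'' analytic content from the global variational input. For (i)$\Rightarrow$(ii), I would start from a fixed $O\in BRD(\mathbb M^2)$ with smooth boundary and $a\in O$, and use the representation $g_{(O,\mathsf g)}(\cdot,a)=-(2\pi)^{-1}(\ln d_\mathsf g(\cdot,a)+H_{(O,\mathsf g)}(\cdot,a))$ near $a$ together with the fact that $g_{(O,\mathsf g)}(\cdot,a)$ is the potential of $\delta_a$. The idea is to test Moser--Trudinger (\ref{eq5d-}) against a suitably normalized multiple of the Green function: if $f=\lambda g_{(O,\mathsf g)}(\cdot,a)$, then $\int_O|\nabla_\mathsf g f|^2\,dA_\mathsf g=\lambda^2\int_O|\nabla_\mathsf g g_{(O,\mathsf g)}(\cdot,a)|^2\,dA_\mathsf g$, and by Green's identity this Dirichlet integral equals $\lambda^2 g_{(O,\mathsf g)}(a,a)$, which is formally infinite. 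To handle the logarithmic singularity I would either truncate $g$ at height $T$ (replacing it by $\min\{g,T\}$, whose Dirichlet energy is finite and computable) or approximate $\delta_a$ by smooth mollifiers, run Moser--Trudinger on the truncation, and then use the elementary bound $e^{c_1 s^2}\ge c_1 s$ (indeed $\ge$ any fixed multiple of $s$ on $s\ge 0$, up to an additive constant absorbed by $C_1A_\mathsf g(O)$) to convert the exponential integrability into a linear $L^1$ bound on $g_{(O,\mathsf g)}(\cdot,a)$ itself; letting $T\to\infty$ and using monotone convergence yields (\ref{eq5d}) with $C_2$ depending only on $c_1,C_1$.

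For (ii)$\Rightarrow$(iii), I would run the level-set argument of Theorem \ref{t51}(ii) in reverse. Given (\ref{eq5d}), apply it on a small geodesic ball $O=B(a,r)$; since $g_{(B(a,r),\mathsf g)}(z,a)\sim-(2\pi)^{-1}\ln(d_\mathsf g(z,a)/r)$ up to lower-order terms and controlled boundary behavior, the left side of (\ref{eq5d}) is comparable to $r^2$ (a computation analogous to the disk case in the proof of Theorem \ref{t4}(ii)), and one extracts a lower area-growth statement $A_\mathsf g(B(a,r))\ge c\,r^2$ and, more to the point, the isoperimetric-type inequality (\ref{eq5c-}) with some constant; the Federer--Fleming equivalence quoted in the text then upgrades (\ref{eq5c-}) to the Sobolev inequality (\ref{eq5c}), which is exactly Nash--Sobolev (\ref{eq5f}) in dimension two (the exponents match: $\|f\|_2^2\le C\|\nabla f\|_1\|f\|_1^{0}$ in the two-dimensional Faber--Krahn/Nash form reads $\|f\|_2^4\le C\|\nabla f\|_2^2\|f\|_1^2$ after Cauchy--Schwarz on $\|\nabla f\|_1\le A_\mathsf g(\mathrm{supp}f)^{1/2}\|\nabla f\|_2$, combined with the isoperimetric content). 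Here the main subtlety — and what I expect to be the hard part — is that (\ref{eq5d}) is a priori only a bound by $C_2A_\mathsf g(O)$ with $C_2$ possibly much larger than the sharp $(4\pi)^{-1}$, so one cannot directly read off $\tau_\mathsf g(\mathbb M^2)$; one must instead argue that a \emph{uniform} Green-function bound over all smooth $O$ forces a uniform lower isoperimetric constant, which is where the flexibility of choosing $O=B(a,r)$ for all $a$ and $r$ is essential.

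For the reverse implication (iii)$\Rightarrow$(ii) under the extra two-sided growth hypothesis (\ref{eq5ge}), the strategy is to feed Nash--Sobolev into the co-area machinery exactly as in the proof of Theorem \ref{t51}(ii): (\ref{eq5f}) is equivalent (again by Federer--Fleming) to an isoperimetric inequality $A_\mathsf g(O)\le c\,L_\mathsf g(\partial O)^2$ for relatively compact $O$, hence to a differential inequality $\frac{d}{dt}(e^{c t}A_\mathsf g(O_t))\le 0$ for the superlevel sets $O_t=\{g_{(O,\mathsf g)}(\cdot,a)>t\}$ via the identities (\ref{eq33a}) and (\ref{eq5im}) and Cauchy--Schwarz; integrating gives $A_\mathsf g(O_t)\le e^{-ct}A_\mathsf g(O)$, and then the layer-cake formula $\int_O g_{(O,\mathsf g)}(\cdot,a)\,dA_\mathsf g=\int_0^\infty A_\mathsf g(O_t)\,dt\le c^{-1}A_\mathsf g(O)$ is precisely (\ref{eq5d}). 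The quadratic area growth and linear perimeter growth in (\ref{eq5ge}) are used to guarantee that the constant $c$ coming out of Federer--Fleming is strictly positive and uniform (so the integral $\int_0^\infty A_\mathsf g(O_t)\,dt$ actually converges and is controlled), which is the one place the extra hypothesis is indispensable; without it the isoperimetric constant could degenerate and (\ref{eq5d}) would fail, consistent with the theorem only claiming (i)$\Rightarrow$(ii)$\Rightarrow$(iii) in general. I would present the three implications in this order, noting at the end that combined with Theorem \ref{t51} all four classical inequalities (Nash--Sobolev, Faber--Krahn, heat-kernel upper bound, log-Sobolev) are mutually equivalent on such surfaces by standard functional-analytic equivalences, which justifies the phrasing in the abstract.
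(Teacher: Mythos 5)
Your step (i)$\Rightarrow$(ii) starts the same way as the paper (truncate the Green function), but the mechanism you propose for concluding fails quantitatively. With $f_T=\min\{g_{(O,\mathsf g)}(\cdot,a),T\}$ one computes $\int_O|\nabla_\mathsf g f_T|^2dA_\mathsf g=T$, so Moser--Trudinger \eqref{eq5d-} can only be applied to $f_T/\sqrt T$; your pointwise bound $e^{c_1s^2}\ge c_1s$ then yields $\int_O f_T\,dA_\mathsf g\le C_1c_1^{-1}\sqrt T\,A_\mathsf g(O)$, which blows up as $T\to\infty$, so monotone convergence gives nothing. The exponential must be exploited on the superlevel set instead: on $\{g_{(O,\mathsf g)}(\cdot,a)\ge t\}$ the normalized truncation equals $\sqrt t$, whence $e^{c_1t}A_\mathsf g(\{g\ge t\})\le C_1A_\mathsf g(O)$, i.e. an exponential tail estimate, and the layer-cake formula $\int_Og\,dA_\mathsf g=\int_0^\infty A_\mathsf g(\{g\ge t\})\,dt$ gives \eqref{eq5d} with $C_2=C_1/c_1$. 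This is exactly how the paper argues, and it is the Chebyshev-type use of \eqref{eq5d-}, not a pointwise linear minorant, that produces a $T$-uniform constant.

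The more serious gaps are in (ii)$\Rightarrow$(iii) and (iii)$\Rightarrow$(ii), both of which you route through the claim that Nash--Sobolev \eqref{eq5f} is equivalent, via Federer--Fleming, to the isoperimetric inequality \eqref{eq5c-}. Federer--Fleming pairs isoperimetry with the $L^1$-gradient Sobolev inequality \eqref{eq5c}, not with the $L^2$ Nash inequality: isoperimetry implies Nash, but Nash is of Faber--Krahn/heat-kernel strength and does not imply a uniform isoperimetric constant, so in (iii)$\Rightarrow$(ii) the step ``\eqref{eq5f} $\Rightarrow$ $A_\mathsf g(O)\le cL_\mathsf g(\partial O)^2$ $\Rightarrow$ differential inequality for the level sets'' is unavailable; likewise, in (ii)$\Rightarrow$(iii) the assertion that the uniform Green bound \eqref{eq5d} forces a uniform isoperimetric constant is precisely the unproved (and in general false) strengthening, and your intermediate claim that $\int_{B(a,r)}g\,dA_\mathsf g\asymp r^2$ on small geodesic balls needs curvature control you do not have. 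The paper avoids both issues: for (ii)$\Rightarrow$(iii) it plugs the first Dirichlet eigenfunction into the Green representation to get Faber--Krahn, $\lambda_{1,\mathsf g}(O)^{-1}\le C_2A_\mathsf g(O)$ as in \eqref{eq5e}, and then invokes Grigor'yan's Faber--Krahn-to-Nash lemma; for (iii)$\Rightarrow$(ii) it passes from \eqref{eq5f} to the heat kernel upper bound \eqref{eq5f1}, writes $\int_Og_{(O,\mathsf g)}(\cdot,a)\,dA_\mathsf g$ as the expected Brownian exit time via \eqref{eqgr}, and uses \eqref{eq5ge} (area lower bound and length upper bound for geodesic balls) together with a rearrangement onto $B(a,r_0)$ with $A_\mathsf g(B(a,r_0))=A_\mathsf g(O)$ and the Markov-property squaring of exit probabilities to bound the exit time by a multiple of $A_\mathsf g(O)$. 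In particular \eqref{eq5ge} enters in that exit-time estimate, not in making a Federer--Fleming constant positive, so your account of where the extra hypothesis is ``indispensable'' also misses the actual mechanism.
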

\begin{proof} (i)$\Rightarrow$(ii) Suppose (i) is true. For $t>0$, $a\in O$ and $O\in BRD(\mathbb M^2)$ with $C^\infty$ boundary, choose
$f_t(z)=\min\{g_{(O,\mathsf{g})}(z,a),t\}$ and set $Q_t=\{z\in O:\
g_{(O,\mathsf{g})}(z,a)<t\}$. Then by Green's formula and the
identity (\ref{eq5im}),
\begin{eqnarray*}
\int_O|\nabla_\mathsf{g}f_t|^2dA_\mathsf{g}&=&\int_{Q_t}|\nabla_\mathsf{g}
g_{(O,\mathsf{g})}(z,a)|^2dA_\mathsf{g}(z)\\
&=&\int_{Q_t}\big(\Delta_\mathsf{g}
g_{(O,\mathsf{g})}(z,a)\big)g_{(O,\mathsf{g})}(z,a)dA_\mathsf{g}(z)\\
&&+\ \ t\int_{\{z\in O:\
g_{(O,\mathsf{g})}(z,a)=t\}}\Big(\frac{\partial
g_{(O,\mathsf{g})}(z,a)}{\partial
n_\mathsf{g}}\Big)dL_\mathsf{g}(z)\\
&=& t.
\end{eqnarray*}
Meanwhile, we have
\begin{eqnarray*}
\int_O\exp\Big(c_1\big|{f_t}/{\sqrt{t}}\big|^2\Big)dA_\mathsf{g}(z)
&\ge&\int_{O\setminus Q_t}\exp\Big(c_1\big|{f_t}/{\sqrt{t}}\big|^2\Big)dA_\mathsf{g}(z)\\
&\ge&\exp(c_1t)A_\mathsf{g}(O\setminus Q_t).
\end{eqnarray*}
Via a $C^\infty$ approximation of $f_t$, we see that (\ref{eq5d-})
is valid for $f_t/\sqrt{t}$, and so that
$$
A_\mathsf{g}(O\setminus Q_t)\le C_1A_\mathsf{g}(O)\exp(-c_1t).
$$
This inequality implies
\begin{eqnarray*}
\int_O g_{(O,\mathsf{g})}(z,a)A_\mathsf{g}(z)&=&\int_0^\infty
A_\mathsf{g}(O\setminus Q_t)dt\\
&\le&C_1A_\mathsf{g}(O)\int_0^\infty\exp(-c_1t)dt\\
&=&C_1c_1^{-1}A_\mathsf{g}(O).
\end{eqnarray*}
Thus (ii) holds with $C_2=C_1c_1^{-1}$.

(ii)$\Rightarrow$(iii) Suppose (ii) is valid. To prove (iii), let
$O\in BRD(\mathbb M^2)$ with $C^\infty$ boundary, and
$\lambda_{1,\mathsf{g}}(O)$ be the first nonzero eigenvalue of the
Laplacian operator $\Delta_\mathsf{g}$ for the Dirichlet problem on
$O$. So, if $u\not\equiv 0$ solves
\[
\left\{\begin{array} {r@{\;,\quad}l}
\big(\Delta_\mathsf{g}-\lambda_{1,\mathsf{g}}(O)\big)u(z)=0 & z\in O,\\
u(z)=0 & z\in \partial O,
\end{array}
\right.
\]
then for each $a\in O$ we have
\begin{eqnarray*}
u(a)&=&\int_O g_{(O,\mathsf{g})}(z,a)\Delta_\mathsf{g} u(z)dA_\mathsf{g}(z)\\
&\le&\lambda_{1,\mathsf{g}}(O)\int_O g_{(O,\mathsf{g})}(z,a)u(z)dA_\mathsf{g}(z)\\
&\le&\lambda_{1,\mathsf{g}}(O)\big(\sup_{z\in O}u(z)\big)\int_O g_{(O,\mathsf{g})}(z,a)dA_\mathsf{g}(z)\\
\end{eqnarray*}
whence getting
$$
1\le\lambda_{1,\mathsf{g}}(O)\sup_{a\in O}\int_O
g_{(O,\mathsf{g})}(z,a)dA_\mathsf{g}(z) \le
C_2\lambda_{1,\mathsf{g}}(O)A_\mathsf{g}(O).
$$
Namely, Faber-Krahn's eigenvalue inequality
\begin{equation}\label{eq5e}
\big({\lambda_{1,\mathsf{g}}(O)}\big)^{-1}=\sup\left\{\frac{\int_O
|f|^2dA_{\mathsf{g}}}{\int_O|\nabla_\mathsf{g}
f|^2dA_{\mathsf{g}}}:\quad f\in C^\infty_0(O),\, f\not\equiv
0\right\}\le C_2 A_{\mathsf{g}}(O)
\end{equation}
holds for all $O\in BRD(\mathbb M^2)$. Now, (\ref{eq5e}) and
\cite[Lemma 6.3]{Gri} yield (iii) with
$C_3=2\big(\epsilon(1-\epsilon)C_2\big)^{-1}\le 8(C_2)^{-1}$ where
$\epsilon$ is any given constant in $(0,1)$.

Next, we prove the second part of the conclusion. Note first that if
(iii) holds then according to \cite[Theorem 4.2.6]{Sal} there is a
constant $C_4>0$ such that the heat-kernel-upper-bound inequality
\begin{equation}\label{eq5f1}
H(t,z,a)\le
C_4t^{-1}\exp\Big(-\frac{\big(d_\mathsf{g}(z,a)\big)^2}{8t}\Big)
\end{equation}
holds for all $(z,a,t)\in\mathbb M^2\times\mathbb M^2\times
(0,\infty)$. Here and henceforth, $H(t,z,a)$ stands for the heat
kernel on $\mathbb M^2$ -- that is -- the smallest positive solution
to the heat equation
\[
\left\{\begin{array} {r@{\;,\quad}l} \big(\frac{\partial}{\partial
t}-\Delta_\mathsf{g}\big) H(t,z,a)=0& (t,z,a)\in(0,\infty)\times \mathbb M^2\times\mathbb M^2,\\
H(0,z,a)=\delta_a(z) & (z,a)\in\mathbb M^2\times\mathbb M^2.
\end{array}
\right.
\]
Even more interestingly, this heat kernel indeed describes the
probability of reaching $z$ at time $t$ starting from $a$.
Consequently, when $a\in O$ and $O\in BRD(\mathbb M^2)$ the
integration of $H(t,z,a)$ over $O$ against $dA_\mathsf{g}(z)$ is the
probability $P_a[B_t\in O]$ of the Brownian motion $B_t$ reaching
$O$ at $t$ starting from $a$ on $(\mathbb M^2,\mathsf{g})$, namely,
$$
P_a[B_t\in O]=\int_O H(t,z,a)dA_\mathsf{g}(z).
$$
If
$$
t_O(w)=\inf\{t>0:\ B_t(w)\notin O\}\quad\hbox{and}\quad P_a[t<t_O]
$$
represent the first exit-time at $w$ and the probability that the
Brownian motion begins with $a$ and hits $O$ by $t_O$ respectively,
then the corresponding expectation $E_a[t_O]$ can be formulated as
\begin{equation}\label{eqgr}
\int_O
g_{(O,\mathsf{g})}(z,a)dA_\mathsf{g}(z)=E_a[t_O]=\int_0^\infty
P_a[t<t_O]dt.
\end{equation}

In light of the study done in \cite[Theorem 1.6]{BaOk}, we continue
our proof as follows. The condition (\ref{eq5f1}) and the layer cake
representation (see \cite[p. 26, Theorem 1.13]{LiLo} again) yield
\begin{eqnarray*}
P_a[t<t_O]&\le&\int_{O}H(t,z,a)dA_\mathsf{g}(z)\\
&\le&C_4t^{-1}\int_O\exp\Big(-(8t)^{-1}\big(d_\mathsf{g}(z,a)\big)^2\Big)dA_\mathsf{g}(z)\\
&=&C_4t^{-1}\int_0^\infty A_\mathsf{g}\big(\{z\in O:\
d_\mathsf{g}(z,a)>\tau\}\big)\Big(\frac{d}{d\tau}\exp\big(-(8t)^{-1}\tau^2\big)\Big)d\tau.
\end{eqnarray*}
The foregoing inequality, plus choosing $r_0>0$ such that
$A_\mathsf{g}(O)=A_\mathsf{g}\big(B(a,r_0)\big)$, further gives
\begin{eqnarray*}
P_a[t<t_O]&\le& C_4t^{-1}\int_0^{r_0} A_\mathsf{g}\big(\{z\in
\mathbb M^2:\
d_\mathsf{g}(z,a)>\tau\}\big)\Big(\frac{d}{d\tau}\exp\big(-(8t)^{-1}\tau^2\big)\Big)d\tau\\
&=&C_4t^{-1}\int_{B(a,r_0)}\exp\Big(-(8t)^{-1}\big(d_\mathsf{g}(z,a)\big)^2\Big)dA_\mathsf{g}(z)\\
&=&C_4t^{-1}\int_0^{r_0}\exp\big(-(8t)^{-1}r^2\big)L_\mathsf{g}\big(\partial B(a,r)\big)dr\\
&\le&C_0C_4t^{-1}\int_0^{r_0}\exp(-(8t)^{-1}r^2)rdr\\
&\le& 4C_0C_4\Big(1-\exp\big(-(8t)^{-1}r_0^2\big)\Big).
\end{eqnarray*}
This estimation, along with (\ref{eqgr}) and (\ref{eq5ge}), now
derives
\begin{eqnarray*}
P_a[2t<t_O]&\le&\Big(\sup_{z\in O}P_z[t<t_O]\Big)^2\\
&\le& \left(4C_0C_4\Big(1-\exp\big(-(8
t)^{-1}r_0^2\big)\Big)\right)^2.
\end{eqnarray*}
This immediately produces
\begin{eqnarray*}
\int_O
g_{(O,\mathsf{g})}(z,a)dA_\mathsf{g}(z)&=&A_\mathsf{g}(O)\int_0^\infty
P_a\big[sA_\mathsf{g}(O)<t_O\big]ds\\
&\le&(2 r_0 C_0C_4)^2\int_0^\infty\big(1-\exp(-t^{-1})\big)^2dt\\
&=&\Big(c_0^{-1}(2C_0C_4)^2\int_0^\infty\big(1-\exp(-t^{-1})\big)^2dt\Big)
A_\mathsf{g}(O),
\end{eqnarray*}
namely, (i) holds.
\end{proof}

\begin{remark}\label{r5final} Several more comments on the last theorem are in order:

\item{\rm(i)} In the case of $(\mathbb
M^2,\mathsf{g})=(\mathbb R^2,ds^2)$, the maximal value of $c_1$ in
(\ref{eq5d-}) is $4\pi$ -- this is due to Moser; see also \cite{Mo}.
Moreover, from \cite[Proposition 2]{Ma} and \cite[(2.10)]{Fl} we see
that (\ref{eq5d-}) with $c_1=4\pi$ amounts to
$$
A_{\mathsf{g}}(E)\le A_{\mathsf{g}}(O)\exp\big(-4\pi
\hbox{mod}_O(E)\big)
$$
for any compact $E\subset O$, where
$$
\hbox{mod}_O(E)=\sup\Big\{\Big(\int_O|\nabla_g
f|^2\,dA_g\Big)^{-1}:\ f\in C^\infty_0(O),\ f\ge 1\ \hbox{on}\
E\Big\}.
$$

\item{\rm(ii)} Under the hypotheses of Theorem \ref{t52}, if $K_\mathsf{g}\ge
0$ and $C_3=4(\pi\lambda_{1,\mathcal{N}})^{-1}$ (the Carlen-Loss's
sharp constant in \cite{CaLo}) where $\lambda_{1,\mathcal{N}}$ is
the first non-zero Neumann eigenvalue of $\Delta$ on radial
functions on $\mathbb D$, then $(\mathbb M^2,\mathsf{g})$ is
isometric to $(\mathbb R^2,ds^2)$ -- this result is proved in
\cite[Theorem 1.4]{Xia}. Similarly, if $K_\mathsf{g}\ge 0$ and
$\tau_\mathsf{g}(\mathbb M^2)=1$, then (\ref{eq5c-})/(\ref{eq5c})
holds with the best Euclidean constant, and hence $(\mathbb
M^2,\mathsf{g})$ is isometric to $(\mathbb R^2,ds^2)$ -- see also
\cite[p. 244]{He}. Accordingly, it is our conjecture that this
isometry follows also from the conditions $C_2=(4\pi)^{-1}$ and
$K_\mathsf{g}\ge 0$. Despite being unable to verify this conjecture,
we can obtain a weaker result as follows.

Suppose $K_\mathsf{g}\ge 0$. Then (\ref{eq5d}) yields $H(t,z,a)\le
C_4t^{-1}$ and so by Li-Yau's maximal volume growth theorem in
\cite{LiYau},
$$
\liminf_{r\to\infty}A_\mathsf{g}\big(B(a,r)\big)(\pi r^2)^{-1}\ge
l_0\quad\hbox{for\ \ some\ \ constant}\ \ l_0>0.
$$
A use of Gromov's comparison theorem (cf. \cite[p. 11]{He}) gives
$$
l_0\le A_\mathsf{g}\big(B(a,r)\big)(\pi r^2)^{-1}\le
1\quad\hbox{for\ \ all}\ \ r>0.
$$
Of course, if $l_0=1$ then $(\mathbb M^2,\mathsf{g})$ is isometric
to $(\mathbb R^2,ds^2)$. But, if $l_0<1$ then a result of
Cheeger-Colding in \cite{ChCo} produces that $(\mathbb
M^2,\mathsf{g})$ is diffeomorphic to $(\mathbb R^2,ds^2)$.

\item{\rm(iii)} From \cite[Theorem 3]{Be} and its proof it follows that the
above Nash-Sobolev's inequality holds whenever there exists a
constant $C_5>0$ such that the Log-Sobolev inequality
\begin{equation}\label{eq5g}
\exp\Big(\int_{\mathbb M^2}|f|^2\ln |f|^2dA_\mathsf{g}\Big)\le
C_5\int_{\mathbb M^2}|\nabla_\mathsf{g} f|^2dA_\mathsf{g}
\end{equation}
holds for all $f\in C^\infty_0(\mathbb M^2)$ with $\int_{\mathbb
M^2}|f|^2dA_\mathsf{g}=1$. As well, it is known that (\ref{eq5f})
implies (\ref{eq5g}) -- see \cite{Gri} for example. Moreover, if
$K_\mathsf{g}\ge 0$ and (\ref{eq5g}) holds with $C_5=(e\pi)^{-1}$,
then $(\mathbb M^2,\mathsf{g})$ is isometric to $(\mathbb R^2,ds^2)$
-- see also \cite[Corollary 1.5]{Ni}.

\item{\rm(iv)} When compared with the setting on the flat surface $(\mathbb R^2,ds^2)$,
the requirement (\ref{eq5ge}) is not artificial -- see also
\cite{LiTa} once again. In fact, if $u$ is a bounded $C^\infty$
function on $\mathbb R^2$ then (\ref{eq5ge}) holds on the manifold
$(\mathbb R^2,e^{2u}ds^2)$, and hence the previously-stated five
inequalities: (\ref{eq5d}); (\ref{eq5f}); (\ref{eq5e});
(\ref{eq5f1}); (\ref{eq5g}) are equivalent. Of course, this
equivalence is new even for $u=0$. Besides, the condition
(\ref{eq5ge}) is closely related to the following conclusion.

$(a)$ Li's criterion for the finite total curvature in \cite{Li-}
tells us that if $(\mathbb M^2,\mathsf{g})$ is a complete noncompact
surface with: finite topological type, at most quadratic area growth
-- $\lim_{r\to\infty}r^{-2}A_{\mathsf{g}}(B(a,r))$ existing, and the
Gauss curvature being of one sign at each end, then $\int_{\mathbb
M^2} |K_\mathsf{g}|dA_\mathsf{g}<\infty$.

$(b)$ Conversely, if $(\mathbb M^2,\mathsf{g})$ is a complete
noncompact surface with $\int_{\mathbb M^2}
|K_\mathsf{g}|dA_\mathsf{g}<\infty$ then
\smallskip

$(b_1)$ Hartman's area-length domination in \cite {Ha} induces two
positive constants $c_0^\ast, C_0^\ast$ ensuring
$$
A_\mathsf{g}\big(B(a,r)\big)\le c_0^\ast r^2\quad\hbox{and}\quad
L_\mathsf{g}\big(\partial B(a,r)\big)\le C_0^\ast
r\quad\hbox{for}\quad 0<r<\infty;
$$

$(b_2)$ Shiohama's minimal-area principle in \cite{Sh} gives
\[
\inf_{\mathsf{g}\in\mathcal{M}(\mathbb M^2)}A_\mathsf{g}(\mathbb M^2)=\left\{\begin{array} {r@{\;,\quad}l}4\pi & \chi(\mathbb M^2)=1,\\
-2\pi\chi(\mathbb M^2) & \chi(\mathbb M^2)\le 0,
\end{array}
\right.
\]
where $\mathcal{M}(\mathbb M^2)$ stands for all complete Riemannian
metrics $\mathsf g$ on $\mathbb M^2$ with the next constraint:
\[
\left\{\begin{array} {r@{\;\quad as \quad}l}
K_\mathsf{g}\le 1 & \chi(\mathbb M^2)\ge 0,\\
K_\mathsf{g}\ge -1 & \chi(\mathbb M^2)<0.
\end{array}
\right.
\]
\end{remark}

\bibliographystyle{amsplain}

\end{document}